\newcommand*{\mailto}[1]{\href{mailto:#1}{\nolinkurl{#1}}}
\newtheorem{theorem}{Theorem}[section]
\newtheorem{lemma}{Lemma}[section]
\newtheorem{remark}{Remark}[section]
\numberwithin{equation}{section}
\begin{document}
\thispagestyle{empty}
{\Large\bf   Some Bohr-type inequalities with two parameters for  }\\\\
{\centerline {\Large\bf bounded analytic functions } }\\\\
\begin{center}

{\bf Jianying Zhou  \quad  Qihan Wang  \quad Boyong Long$^*$}
\let\thefootnote\relax\footnotetext{\\This work is supported by NSFC (No.12271001) and Natural Science Foundation of Anhui
Province (2308085MA03), China. \\$^*$Corresponding author.\\ \quad Email: jianying820@163.com;\quad qihan@ahu.edu.cn; \quad boyonglong@163.com
}
\end{center}

\centerline {\small  (School of Mathematical Sciences, Anhui University, Hefei 230601, China)}
 \hspace*{\fill} \\\\
\noindent{\bf Abstract:}
{In this article, some Bohr inequalities for analytical functions on the unit disk are generalized  to the forms with two parameters. One of our results is sharp.
}

\medskip
\noindent {\bf Keywords:} {Bohr's inequality; Bohr radius; analytical functions }
\medskip

\noindent {\bf 2020 Mathematics Subject Classification:} {30A10;  30B10}

\section{Introduction}

Let $|z|=r$, $\mathbb{D}:= \left\{z\in C:|z|<1\right\}$ and $\mathcal{BA}(\mathbb{D}):=\{f:f(z)=\sum_{k=0}^\infty a_k z^{k},\,\, z\in\mathbb{D}\,\, \mbox{and}\,\, |f(z)|<1\}$
be the  class of analytic function which mappings unit disk $\mathbb{D}$ into itself. The most classical Bohr's theorem \cite{bohr1914theorem} asserts that if $f\in\mathcal{BA}(\mathbb{D})$, then
   \begin{equation}\label{1.1}
   \sum_{k = 0}^\infty  |a_k|r^k  \leq 1
   \end{equation}
 for $r<r_{0}$. Inequality (\ref{1.1}) and $r_{0}$ are called Bohr inequality and Bohr radius, respectively.  Bohr takes  $r_{0}=1/6$ at first. Later, it is proved that the sharp result is  $r_{0}=1/3$. See [\cite{sidon1927satz},\cite{tomic1962theoreme}].

The classical Bohr's theorem has been generalized in many aspects. On one hand, the domain of the function has be changed into other different domains.
   Kaptano\u{g}lu\cite{kaptanoglu2005bohr}  has studied the Bohr's inequality for bounded analytic functions defined on elliptic regions  rather than unit disk.  Also, Evdoridis, Ponnusamy and  Rasila\cite{evdoridis2021improved}considered the case of  a shifted disk. More generally,  Kresin\cite{kresin2007hadamard} has researched the case of simply connected domain.

On the other hand, the bounded analytic function has been substituted into other classes of function.
Bhowmik\cite{bhowmik2021bohr} considered the Bohr radius problem of holomorphic functions in higher dimensions.
Allu\cite{allu2022bohr},Huang\cite{2021Bohr} and Kayumov\cite{2018Bohr} discussed Bohr inequalities for harmonic function.
Furthermore, Kayumov\cite{kayumov2022bohr,kayumov2020bohr,kayumov2021bohr} introduced the $Ces\acute{a}ro$ operator  and obtained some  Bohr type inequalities. Bhowmik\cite{bhowmik2021characterization} and Latacite\cite{lata2022bohr} discussed Bohr phenomenon for Banach space valued analytic functions
defined on the open unit disk $\mathbb{D}$.

Recently,  Wu\cite{wu2022some} and Hu\cite{hu2021bohr},\cite{hu2022bohr} explored some Bohr-type inequalities with one parameter or involving convex combination.

The following three theorems are related to the generalizations of classical Bohr's inequality.

\noindent{\bf Theorem 1.1} \cite{kayumov2017bohr}
Suppose that $f(z)\in \mathcal{BA}(\mathbb{D})$. Then
\begin{equation*}
|f(z)| + \sum_{k = 1}^\infty  |a_k||z|^k  \leq 1
\end{equation*}for
$|z|=r \leq \sqrt{5} - 2$.
The radius $r = \sqrt {5}  - 2$ is the best possible.

\noindent{\bf Theorem 1.2} \cite{Ming2018Bohr}
Suppose that $f(z)\in \mathcal{BA}(\mathbb{D})$, $N(\geq2)$ is an integer. Then
\begin{equation*}
|f(z)|+\sum_{k=N}^{\infty}|\frac{{f^{(k)}(z)}}{k!}||z|^{k}\leq 1
\end{equation*}for $|z|=r \leq r_N$,
where $r_N$ is the minimum positive toot of the equation $(1+r)(1-2r)(1-r)^{N-1}-2r^N=0$ and the radius  $r_N$ cannot be improved.

\noindent{\bf Theorem 1.3} \cite{wu2022some}
Suppose that $f(z)\in \mathcal{BA}(\mathbb{D})$. Let $S_r$ be the area of the image of the subdisk $\mathbb{D}_r={z:|z|<r}$  under the mapping $f$. Then for arbitrary $t \in (0,1]$, it holds that
\begin{equation*}
t\sum_{k = 0}^\infty |a_k||z|^k  + (1-t)\left(\frac{S_r}{\pi }\right) \leq 1
\end{equation*}for $|z|=r \leq r_*$,
where $S_r$ the radius $$r_*=r_*(t) = \left\{ \begin{array}{*{20}{l}}
r_t, \quad &t \in (0,\frac{9}{17})\\
\frac{1}{3}, \quad &t \in [\frac{9}{17},1]
\end{array}
 \right.$$ and the numbers $r = r_t$ is the unique positive root of the equation $$tr^3 + tr^2 + (4 - 5t)r - t = 0$$ in the interval $(0,\frac{1}{3})$.

In this article, we generalized the above Theorems 1.1-1.3  to two-parameter versions. The introduction  of two parameters rapidly complicates the corresponding problems, especially when we want verify the results if or not to be sharp, in other words,  to find the extremal functions. Even so, one of our results is proved to be sharp.

\section{ Some Lemmas }
In order to establish our main results, we need the following lemmas.

\begin{lemma}\label{lemma2.1}
\rm {{\cite{graham2003geometric}}Suppose $f(z) = \sum\limits_{k = 0}^\infty  a_kz^k$ is analytic in the unit disk $\mathbb{D}$ and $|f(z)|\leq 1$. Then
\begin{equation*}
|a_k| \leq 1 - |a_0|^2
\qquad
\text{for all}
\quad
k = 1,2,....
\end{equation*}}
\end{lemma}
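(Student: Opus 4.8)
The plan is to reduce the general coefficient bound to the case $k=1$, where it becomes the infinitesimal Schwarz--Pick inequality at the origin, and to handle the reduction by a root-extraction (averaging) trick. Note first that the naive approach through Parseval's identity, which yields $\sum_{k\ge 0}|a_k|^2\le 1$ and hence only $|a_k|\le\sqrt{1-|a_0|^2}$, is too weak: we need the sharper factor $1-|a_0|^2$, so a Schwarz-type argument is essential.

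First I would treat $k=1$. Composing $f$ with the disk automorphism $\varphi(w)=\frac{w-a_0}{1-\overline{a_0}\,w}$ produces $g=\varphi\circ f$, which again maps $\mathbb{D}$ into $\overline{\mathbb{D}}$ and satisfies $g(0)=0$. The Schwarz lemma then gives $|g'(0)|\le 1$. Since $\varphi'(a_0)=\frac{1}{1-|a_0|^2}$ by a direct computation, the chain rule yields $g'(0)=\frac{a_1}{1-|a_0|^2}$, whence $|a_1|\le 1-|a_0|^2$.

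For general $k\ge 1$ I would pass to an auxiliary function whose first coefficient is $a_k$. Setting $\omega=e^{2\pi i/k}$ and averaging, $F(z)=\frac{1}{k}\sum_{j=0}^{k-1}f(\omega^j z)$ is again a self-map of $\mathbb{D}$ (an average of functions bounded by $1$) and retains only the coefficients $a_0,a_k,a_{2k},\dots$, so $F(z)=a_0+a_k z^k+a_{2k}z^{2k}+\cdots$. Writing $F(z)=H(z^k)$ defines $H(u)=a_0+a_k u+a_{2k}u^2+\cdots$; for each $u\in\mathbb{D}$ one has $H(u)=F(u^{1/k})$ for any branch of the root, and since $|u^{1/k}|<1$ this forces $|H(u)|\le 1$, so $H$ is a disk self-map with $H(0)=a_0$ and $H'(0)=a_k$. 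Applying the $k=1$ case to $H$ gives $|a_k|\le 1-|a_0|^2$.

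The step I expect to require the most care is establishing that $H$ is genuinely a bounded analytic self-map of $\mathbb{D}$: one must check that the apparent multivaluedness of $z\mapsto z^{1/k}$ is harmless (it is, because $F$ depends only on $z^k$), and confirm that $H$ is analytic at the origin with the claimed Taylor coefficients. The remaining ingredients---the boundedness of the average $F$ and the derivative computation $\varphi'(a_0)=(1-|a_0|^2)^{-1}$---are routine.
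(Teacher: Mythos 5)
Your proof is correct. Note that the paper itself supplies no proof of Lemma 2.1 at all: it is quoted from the monograph of Graham and Kohr \cite{graham2003geometric}, so there is no in-paper argument to compare against. Your two-step argument---the Schwarz--Pick estimate at the origin obtained by composing with the automorphism $\varphi(w)=\frac{w-a_0}{1-\overline{a_0}w}$, followed by the root-of-unity averaging $F(z)=\frac{1}{k}\sum_{j=0}^{k-1}f(\omega^j z)=H(z^k)$ to transfer the $k=1$ bound to the $k$-th coefficient---is the standard proof of this coefficient inequality in the Bohr-inequality literature, and all the delicate points you flag are handled correctly: the filter kills every coefficient not divisible by $k$, $H(u)=\sum_m a_{mk}u^m$ is a genuine power series with radius of convergence at least $1$ (so no multivaluedness issue actually arises once you define $H$ by this series and observe $F(z)=H(z^k)$), and $|H|\le 1$ follows by evaluating at any $k$-th root. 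The only detail worth one extra line is the degenerate case $|a_0|=1$: there $\varphi$ is not a disk automorphism, but the maximum modulus principle forces $f$ to be a unimodular constant, so $a_k=0$ and the claimed inequality reads $0\le 0$; likewise, since the hypothesis is $|f|\le 1$ rather than $|f|<1$, one should observe that $g=\varphi\circ f$ either maps into the open disk or is constant, so the Schwarz lemma applies. These are trivial fixes, not gaps.
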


\begin{lemma}\label{lemma2.2} $(\rm {Schwartz-Pick\: lemma})$
\rm{Suppose that $f(z) = \sum\limits_{k = 0}^\infty  a_kz^k$ is analytic function of $\mathbb{D}$ into itself. Then
\begin{equation*}
\frac{|f(z_1) - f(z_2)|}{|1 - \overline {f(z_1)} f(z_2)|} \leq \frac{|z_1 - z_2|}{|1 - \overline {z_1} z_2|}
\qquad
for \quad z_1, z_2 \in \mathbb{D},
\end{equation*}
and the equality holds for distinct ${z_1},{z_2} \in \mathbb{D}$ if and only if $f(z)$ is a M\"{o}bius transformation. Moreover,
\begin{equation*}
|f'(z)| \leq \frac{1 - |f(z)|^2}{1 - |z|^2}
\qquad
for \quad z \in \mathbb{D},
\end{equation*}
and the equality holds for some $z \in \mathbb{D}$ if and only if $f(z)$ is a M\"{o}bius transformation.}
\end{lemma}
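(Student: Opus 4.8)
The plan is to derive both assertions from the classical Schwarz lemma by conjugating $f$ with disk automorphisms. For $a\in\mathbb{D}$ write $\varphi_a(z)=\dfrac{a-z}{1-\overline{a}z}$; each $\varphi_a$ is an analytic bijection of $\mathbb{D}$ onto itself with $\varphi_a(a)=0$, $\varphi_a(0)=a$, and $\varphi_a\circ\varphi_a=\mathrm{id}$, and a direct computation gives $|\varphi_a(b)|=\dfrac{|a-b|}{|1-\overline{a}b|}$. These are the only facts about $\varphi_a$ that I will need: that it maps $\mathbb{D}$ into $\mathbb{D}$, that it sends $a$ to $0$, and its modulus formula.

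First I would prove the two-point inequality. Fix $z_1,z_2\in\mathbb{D}$, put $w_1=f(z_1)$, $w_2=f(z_2)$, and form $g=\varphi_{w_1}\circ f\circ\varphi_{z_1}$. Since $f$ and each $\varphi$ map $\mathbb{D}$ into $\mathbb{D}$, $g:\mathbb{D}\to\mathbb{D}$ is analytic, and using $\varphi_{z_1}(0)=z_1$ one checks
\[
g(0)=\varphi_{w_1}\bigl(f(z_1)\bigr)=\varphi_{w_1}(w_1)=0 .
\]
The classical Schwarz lemma then yields $|g(\zeta)|\le|\zeta|$ for all $\zeta\in\mathbb{D}$. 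Evaluating at $\zeta=\varphi_{z_1}(z_2)$ and using the involution property $\varphi_{z_1}\circ\varphi_{z_1}=\mathrm{id}$ gives $g(\zeta)=\varphi_{w_1}(w_2)$, so $|\varphi_{w_1}(w_2)|\le|\varphi_{z_1}(z_2)|$; unwinding the modulus formula reproduces exactly the stated inequality
\[
\frac{|f(z_1)-f(z_2)|}{|1-\overline{f(z_1)}f(z_2)|}\le\frac{|z_1-z_2|}{|1-\overline{z_1}z_2|}.
\]

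For the equality statement I would invoke the rigidity part of the Schwarz lemma: $|g(\zeta)|=|\zeta|$ at one point $\zeta\neq0$ forces $g(z)=e^{i\theta}z$, a rotation. Since $f=\varphi_{w_1}\circ g\circ\varphi_{z_1}$ is then a composition of disk automorphisms, $f$ is itself a M\"obius transformation, and conversely. The derivative estimate follows by a limiting argument: dividing both sides of the two-point inequality by $|z_1-z_2|$ and letting $z_2\to z_1=z$, the left-hand side tends to $\dfrac{|f'(z)|}{1-|f(z)|^2}$ and the right-hand side to $\dfrac{1}{1-|z|^2}$, which rearranges to $|f'(z)|\le\dfrac{1-|f(z)|^2}{1-|z|^2}$; its equality case descends from that of the two-point version.

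The statement is classical, so there is no genuine difficulty here; the only points requiring care are bookkeeping ones. The main thing to verify is that the algebraic simplification of $|\varphi_{w_1}(w_2)|\le|\varphi_{z_1}(z_2)|$ coincides exactly with the asserted inequality, which hinges on choosing the correct normalization of $\varphi_a$ so that $g$ fixes the origin, and then justifying the passage to the limit in the derivative estimate.
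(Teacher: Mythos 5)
Your argument is correct in substance, but note that the paper offers no proof of Lemma \ref{lemma2.2} at all: it is quoted as the classical Schwarz--Pick lemma without proof or citation, so there is no internal argument to compare against, and your proposal simply supplies the standard textbook derivation. The conjugation $g=\varphi_{w_1}\circ f\circ\varphi_{z_1}$ with $g(0)=0$, the application of the Schwarz lemma at $\zeta=\varphi_{z_1}(z_2)$, and the unwinding via the involutivity of $\varphi_a$ and the modulus identity $|\varphi_a(b)|=|a-b|/|1-\overline{a}b|$ are all sound, and the two-point equality case is handled correctly (since $z_1\neq z_2$ gives $\zeta\neq 0$, rigidity applies). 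One step, however, is stated too loosely: the equality case of the derivative inequality does \emph{not} ``descend'' from the two-point version, because a strict inequality at distinct points can degenerate to equality in the limit $z_2\to z_1$ --- the limiting argument yields only the inequality $|f'(z)|\leq (1-|f(z)|^2)/(1-|z|^2)$, never its rigidity statement. To repair this, apply the derivative half of the Schwarz lemma directly to $g=\varphi_{w}\circ f\circ\varphi_{z}$ with $w=f(z)$: the chain rule together with $\varphi_a'(\zeta)=-(1-|a|^2)/(1-\overline{a}\zeta)^2$ gives $|g'(0)|=\dfrac{(1-|z|^2)\,|f'(z)|}{1-|f(z)|^2}$, so equality at the point $z$ forces $|g'(0)|=1$, whence $g$ is a rotation and $f=\varphi_{w}\circ g\circ\varphi_{z}$ is a M\"obius automorphism of the disk. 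With that one correction your proposal is a complete proof of the lemma as stated.
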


\begin{lemma}\label{lemma2.3}{\rm \cite{dai2008note}}
\rm{Suppose that $f(z)\in \mathcal{BA}(\mathbb{D})$. Then
\begin{equation*}
|f^{(k)}(z)|\leq\frac{k!(1-|f(z)|^{2})}{(1-|z|^{2})^k}(1+|z|)^{k-1}
\qquad
\text{for all}
\quad
k = 1,2,....,
\end{equation*}and the equality holds for if and only if $f(z)$ is a M\"{o}bius transformation.}
\end{lemma}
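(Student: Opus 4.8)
The plan is to fix an arbitrary point $z_0\in\mathbb{D}$ and reduce the estimate at $z_0$ to a statement about the Taylor coefficients of a suitable composition at the origin, where Lemma~\ref{lemma2.1} applies directly. Note first that the case $k=1$ is nothing but the infinitesimal Schwarz--Pick inequality of Lemma~\ref{lemma2.2}, since the claimed bound then reads $|f'(z_0)|\le (1-|f(z_0)|^2)/(1-|z_0|^2)$. For general $k$ the key device is the disk automorphism $\varphi(w)=\frac{w+z_0}{1+\overline{z_0}w}$, which satisfies $\varphi(0)=z_0$ and $\varphi'(w)=\frac{1-|z_0|^2}{(1+\overline{z_0}w)^2}$. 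The composition $g:=f\circ\varphi$ is again analytic on $\mathbb{D}$ with $|g|\le 1$, so writing $g(w)=\sum_{n=0}^\infty c_nw^n$ we have $c_0=g(0)=f(z_0)$ and, by Lemma~\ref{lemma2.1}, $|c_n|\le 1-|c_0|^2=1-|f(z_0)|^2$ for every $n\ge 1$.

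Next I would transport the derivative $f^{(k)}(z_0)$ onto $g$ by the Cauchy integral formula together with the substitution $z=\varphi(w)$. Using $z-z_0=\varphi(w)-\varphi(0)=\frac{(1-|z_0|^2)w}{1+\overline{z_0}w}$ and $dz=\varphi'(w)\,dw$, the identity
\begin{equation*}
f^{(k)}(z_0)=\frac{k!}{2\pi i}\oint\frac{f(z)}{(z-z_0)^{k+1}}\,dz
\end{equation*}
becomes, after cancelling the powers of $(1+\overline{z_0}w)$ and of $(1-|z_0|^2)$,
\begin{equation*}
f^{(k)}(z_0)=\frac{k!}{(1-|z_0|^2)^k}\cdot\frac{1}{2\pi i}\oint\frac{g(w)\,(1+\overline{z_0}w)^{k-1}}{w^{k+1}}\,dw .
\end{equation*}
The remaining contour integral is precisely the coefficient of $w^k$ in $g(w)(1+\overline{z_0}w)^{k-1}$, so that
\begin{equation*}
f^{(k)}(z_0)=\frac{k!}{(1-|z_0|^2)^k}\sum_{j=0}^{k-1}\binom{k-1}{j}\overline{z_0}^{\,j}c_{k-j}.
\end{equation*}
The crucial point is that every index $k-j$ occurring here ranges over $1,2,\dots,k$, so each $c_{k-j}$ is a coefficient to which the bound from Lemma~\ref{lemma2.1} applies; no $c_0$ term survives.

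It then remains only to estimate. Taking moduli, applying $|c_{k-j}|\le 1-|f(z_0)|^2$ termwise, and summing the binomial series gives
\begin{equation*}
|f^{(k)}(z_0)|\le\frac{k!(1-|f(z_0)|^2)}{(1-|z_0|^2)^k}\sum_{j=0}^{k-1}\binom{k-1}{j}|z_0|^{\,j}=\frac{k!(1-|f(z_0)|^2)}{(1-|z_0|^2)^k}(1+|z_0|)^{k-1},
\end{equation*}
which is exactly the asserted inequality; since $z_0$ was arbitrary this settles it for all $z\in\mathbb{D}$.

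For the equality statement I would trace the two inequalities above backwards. Equality forces, on the one hand, that $|c_{k-j}|=1-|f(z_0)|^2$ for every index carrying nonzero weight $\binom{k-1}{j}|z_0|^j$, i.e.\ that the relevant Taylor coefficients of $g$ are extremal for Lemma~\ref{lemma2.1}, and on the other hand that the numbers $\binom{k-1}{j}\overline{z_0}^{\,j}c_{k-j}$ all share a common argument so that the triangle inequality is tight. Determining the functions $g$ (and hence $f=g\circ\varphi^{-1}$) for which both conditions hold simultaneously is the delicate part: this is where the extremal (M\"{o}bius-type) functions are pinned down and where one must reconcile the phase conditions with the statement of the cited lemma. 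I expect this equality analysis, rather than the inequality itself, to be the main obstacle, since it hinges on a careful characterization of when the coefficient bound of Lemma~\ref{lemma2.1} is attained together with the attendant alignment of arguments.
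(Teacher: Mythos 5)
The paper offers no proof of this lemma at all---it is quoted from the cited note of Dai and Pan---so the only meaningful comparison is with that source, and your inequality argument is in substance the standard one found there: conjugate by the automorphism $\varphi(w)=(w+z_0)/(1+\overline{z_0}w)$, apply the coefficient bound of Lemma~\ref{lemma2.1} to $g=f\circ\varphi$, and recover $f^{(k)}(z_0)$ from the Taylor coefficients of $g$. Your computation checks out: the substitution in the Cauchy integral does yield $f^{(k)}(z_0)=\frac{k!}{(1-|z_0|^2)^k}\sum_{j=0}^{k-1}\binom{k-1}{j}\overline{z_0}^{\,j}c_{k-j}$, every index $k-j$ lies in $\{1,\dots,k\}$ so no $c_0$ term survives and Lemma~\ref{lemma2.1} applies termwise, and the binomial sum produces $(1+|z_0|)^{k-1}$. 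That part is complete and correct.

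The one genuine gap is the equality clause, which you defer as ``the main obstacle''---but your own framework settles it in two lines, and doing so reveals that the clause as printed in the lemma is wrong for $k\geq 2$, so you should not have expected to prove it. Suppose equality holds at some $z_0\neq 0$ with $k\geq 2$. Then, as you note, all weights $\binom{k-1}{j}|z_0|^j$ are nonzero, so in particular $|c_1|=1-|c_0|^2$, which is Schwarz--Pick equality for $g$ at $w=0$; by Lemma~\ref{lemma2.2} this forces $g$ (hence $f$) to be a M\"{o}bius transformation. But then $g(w)=e^{i\theta}(\lambda-w)/(1-\overline{\lambda}w)$ with $|\lambda|<1$ gives $|c_n|=(1-|\lambda|^2)|\lambda|^{n-1}<1-|c_0|^2$ for $n\geq 2$, contradicting the forced condition $|c_2|=1-|c_0|^2$. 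So for $k\geq 2$ and $z\neq 0$ equality never occurs, and the ``if'' direction fails outright: $f(z)=z$ is a M\"{o}bius transformation yet gives left side $0$ against a strictly positive right side when $k\geq 2$. At $z=0$ with $k\geq 2$ the non-M\"{o}bius function $f(z)=z^k$ attains equality, so the ``only if'' direction fails there too. The equality statement is thus literally correct only for $k=1$, where your observation that the bound reduces to Lemma~\ref{lemma2.2} already disposes of it. Since the paper invokes only the inequality (in Theorems~\ref{theorem3.3} and~\ref{theorem3.4}), this defect is harmless downstream, and the part of the lemma that matters is exactly the part you proved correctly.
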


\begin{lemma}\label{lemma2.4}\rm {Let $\alpha\in[\frac{4}{5},1] $ and $\beta\in(0,\alpha)$. Then it holds that
 the equalities
\begin{equation}\label{2.1}
2\alpha r^2+(2\beta+\alpha)r-\alpha=0
 \end{equation}
 and
\begin{equation}\label{2.2}
2r^2+(\beta-2\alpha-1)r+\alpha-\beta=0\end{equation}
have unique positive real roots $r^{*}_{1}$ and $r^{*}_{2}$  in the interval $(0,\frac{1}{2})$, respectively.
Furthermore,  $r^{*}_{1}>r^{*}_{2}$ .}
\end{lemma}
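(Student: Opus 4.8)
The plan is to handle the two equations separately for the existence-and-uniqueness claims, and then to compare the two roots by a substitution that collapses the difference of the two quadratics to an affine function, so that a single sign condition controls everything. Throughout write $g_1(r)=2\alpha r^2+(2\beta+\alpha)r-\alpha$ and $g_2(r)=2r^2+(\beta-2\alpha-1)r+(\alpha-\beta)$ for the left-hand sides of \eqref{2.1} and \eqref{2.2}.

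For $g_1$ I would first record the endpoint values $g_1(0)=-\alpha<0$ and $g_1(1/2)=\beta>0$, so the intermediate value theorem produces a root in $(0,1/2)$. By Vieta's formulas the product of the two roots of $g_1$ equals $-\alpha/(2\alpha)=-1/2<0$, so $g_1$ has exactly one positive root; since its vertex sits at $r=-(2\beta+\alpha)/(4\alpha)<0$, $g_1$ is strictly increasing on $[0,\infty)$, and the unique positive root is the one already located in $(0,1/2)$. This is $r_1^{\ast}$. For $g_2$ the endpoint values are $g_2(0)=\alpha-\beta>0$ and $g_2(1/2)=-\beta/2<0$, again giving a root in $(0,1/2)$; here Vieta gives product $(\alpha-\beta)/2>0$ and sum $(2\alpha+1-\beta)/2>0$, so both roots are real and positive, and since $g_2(1/2)<0$ the point $1/2$ lies strictly between them. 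Hence the smaller root $r_2^{\ast}$ lies in $(0,1/2)$ while the larger root $\rho_2$ satisfies $\rho_2>1/2$, which also settles uniqueness in $(0,1/2)$. The hypotheses $\alpha\in[4/5,1]$ and $\beta\in(0,\alpha)$ enter only to fix the signs $\alpha>0$, $\beta>0$, $\alpha-\beta>0$ and $2-\alpha>0$ used above.

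The core of the lemma is $r_1^{\ast}>r_2^{\ast}$, and here is where I expect the real work. Dividing \eqref{2.1} by $\alpha$ gives $h(r):=2r^2+(1+2\beta/\alpha)r-1$, which shares the positive root $r_1^{\ast}$ and, like $g_1$, is strictly increasing on $[0,\infty)$. The difference $h(r)-g_2(r)$ is the affine function $Cr-P$ with $C=2+2\alpha+2\beta/\alpha-\beta>0$ and $P=1+\alpha-\beta>0$; it vanishes exactly at $v:=P/C=\alpha(\alpha-\beta+1)/\bigl(2\alpha(\alpha+1)+\beta(2-\alpha)\bigr)$, and the same computation shows $A-2B=\beta(2+\alpha)>0$, so $v\in(0,1/2)$, where $A=2\alpha(\alpha+1)+\beta(2-\alpha)$ and $B=\alpha(\alpha-\beta+1)$. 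The decisive observation is that at this single point the two functions agree, $h(v)=g_2(v)$. Consequently it suffices to prove that this common value is negative: if $g_2(v)<0$ then, because $v<1/2<\rho_2$ and $g_2$ is negative precisely on $(r_2^{\ast},\rho_2)$, we get $v>r_2^{\ast}$; and $h(v)<0$ together with the monotonicity of $h$ and $h(r_1^{\ast})=0$ gives $v<r_1^{\ast}$. Chaining these yields $r_2^{\ast}<v<r_1^{\ast}$, which is the claim.

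The main obstacle is thus reduced to the single sign condition $g_2(v)<0$. Clearing the positive factor $A^2$, this becomes the polynomial inequality $2B^2+(\beta-2\alpha-1)AB+(\alpha-\beta)A^2<0$ in $\alpha,\beta$, which I would verify over the rectangle $\alpha\in[4/5,1]$, $0<\beta<\alpha$, expecting the constraints on $\alpha$ and $\beta$ to be exactly what pins down its sign. It is worth noting why the affine-collapse identity $h(v)=g_2(v)$ is the right device: a crude estimate such as $(r_2^{\ast})^2<r_2^{\ast}/2$ (valid since $r_2^{\ast}<1/2$) reduces $g_1(r_2^{\ast})$ to an affine expression but is too lossy for small $\beta$, whereas evaluating both quadratics at the exact separating point $v$ replaces two comparisons by one clean polynomial check.
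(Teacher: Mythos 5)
Your root-location arguments are correct and even slicker than the paper's: the endpoint signs $g_1(0)=-\alpha<0$, $g_1(1/2)=\beta>0$, $g_2(0)=\alpha-\beta>0$, $g_2(1/2)=-\beta/2<0$ together with Vieta do everything that the paper accomplishes by writing out all four roots via the quadratic formula. Your comparison device is also sound: normalizing \eqref{2.1} to $h(r)=2r^2+(1+2\beta/\alpha)r-1$, the difference $h-g_2$ is affine with unique zero $v=B/A\in(0,\tfrac12)$ (your computation $A-2B=\beta(2+\alpha)>0$ checks out), and $g_2(v)=h(v)<0$ would indeed force $r_2^*<v<r_1^*$ by the monotonicity of $h$ and the sign pattern of the upward parabola $g_2$. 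But the proof as written has a genuine gap at precisely the one hard point of the lemma: the decisive inequality $2B^2+(\beta-2\alpha-1)AB+(\alpha-\beta)A^2<0$ is announced ("which I would verify over the rectangle") but never verified. This cannot be waved through: as $\beta\to0^+$ both $r_1^*$ and $r_2^*$ tend to $\tfrac12$, so $g_2(v)\to0$ and the inequality degenerates on the boundary of your parameter region; any completion must exhibit the factor of $\beta$ that governs this near-equality, and without it the lemma's conclusion $r_1^*>r_2^*$ remains unproven.

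The gap is fillable, and your reduction in fact lands exactly on the paper's target. Writing $A=2\alpha(\alpha+1)+(2-\alpha)\beta$ and $B=\alpha(\alpha+1)-\alpha\beta$, the $\beta$-free terms cancel and a direct expansion gives the identity
\[
2B^2+(\beta-2\alpha-1)AB+(\alpha-\beta)A^2
=-\beta\left[(4-2\alpha)\beta^2+4\alpha^2\beta+\alpha(-\alpha^2+\alpha+2)\right],
\]
and since $-\alpha^2+\alpha+2=(2-\alpha)(1+\alpha)$, every term in the bracket is strictly positive for $\alpha\in(0,1]$ and $\beta\in(0,\alpha)$, which yields the required negativity at once (the prefactor $\beta$ accounts for the degeneracy at $\beta=0$). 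The bracket is exactly the polynomial $g(\alpha,\beta)$ of the paper's proof, which reaches it by a different and riskier route: explicit radical formulas for $r_1^*$ and $r_2^*$, followed by isolating and squaring the radicals twice, steps the paper performs without checking that the sides being squared are nonnegative. Your crossing-point argument avoids radicals entirely, so with the identity above inserted your proof is complete and arguably cleaner than the paper's; note also that, as in the paper's final step, the hypothesis $\alpha\in[\tfrac45,1]$ plays no real role in the comparison $r_1^*>r_2^*$ (it is needed only later, in the proof of Theorem \ref{theorem3.4}).
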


\begin{proof}
It is obvious that the equation (\ref{2.1}) has two real roots $R^{*}_{1}=\frac{-(2\beta+\alpha)+\sqrt{(2\beta+\alpha)^2+8\alpha^2}}{4\alpha}$  and $R^{*}_{2}=\frac{-(2\beta+\alpha)-\sqrt{(2\beta+\alpha)^2+8\alpha^2}}{4\alpha}$  with $ R^{*}_{2} < 0 <R^{*}_{1}$.  Furthermore, one observes that $R^{*}_{1}< \frac{1}{2}$. Thus, $r^{*}_{1}=R^{*}_{1}$.

In the same way, the equation (\ref{2.2}) has two real roots $R^{*}_{3}=\frac{(1+2\alpha-\beta)+\sqrt{(\beta-2\alpha-1)^2-8(\alpha-\beta)}}{4}$ and         $R^{*}_{4}=\frac{(1+2\alpha-\beta)-\sqrt{(\beta-2\alpha-1)^2-8(\alpha-\beta)}}{4}$ with $0 < R^{*}_{4} < R^{*}_{3}$. Observe that $R^{*}_{4}<\frac{1}{2}< R^{*}_{3}$ . So $r^{*}_{2}=R^{*}_{4}$.

Let\begin{align*}
h(\alpha,\beta)&=r^{*}_{1}-r^{*}_{2}\\
&=\frac{-(2\beta+\alpha)+\sqrt{(2\beta+\alpha)^2+8\alpha^2}}{4\alpha}-\frac{(1+2\alpha-\beta)-\sqrt{(\beta-2\alpha-1)^2-8(\alpha-\beta)}}{4}.
\end{align*} Then we need to show that $h(\alpha,\beta)> 0$ for $\alpha\in[\frac{4}{5},1] $ and $\beta\in(0,\alpha)$.
It follows that $$\sqrt{(2\beta+\alpha)^2+8\alpha^2}+\alpha\sqrt{(\beta-2\alpha-1)^2-8(\alpha-\beta)}> 2\alpha^2+2\alpha-\alpha\beta+2\beta,$$
$$\sqrt{[(2\beta+\alpha)^2+8\alpha^2][(\beta-2\alpha-1)^2-8(\alpha-\beta)]}>6\alpha^2+2\beta-\alpha\beta-3\alpha-2\beta^2,$$
and $$ (4-2\alpha)\beta^2+4\alpha^2 \beta+\alpha(-\alpha^2+\alpha+2)>0.$$

Let
\begin{align*}
g(\alpha,\beta)&=(4-2\alpha)\beta^2+4\alpha^2 \beta+\alpha(-\alpha^2+\alpha+2).
\end{align*}

Observe that $g(\alpha,\beta)$ is continuous and strictly increasing with respect to $\beta$ on the interval $(0,\alpha)$, $g(\alpha,0)=\alpha(-\alpha^2+\alpha+2)>0$ and $g(\alpha,\alpha)=\alpha(\alpha^2+5\alpha+2)>0$  imply that $g(\alpha,\beta)>0$. Therefore,  $g(\alpha,\beta)>0$ holds  for $\alpha\in[\frac{4}{5},1] \,\,\mbox{and} \,\, \beta\in(0, \alpha)$.

\end{proof}

\section{ Main results }

\begin{theorem}\label{theorem3.1}
\rm { Suppose that $f(z)\in \mathcal{BA}(\mathbb{D})$. Let $a:=|a_{0}|$. Then for arbitrary $  \alpha\in (0,1]$, $\beta\in (1-\alpha,\infty)$, it holds that
\begin{equation*}
\alpha|f(z)|+(1-\alpha)a+\beta \sum_{k=1}^{\infty}|a_{k}||z|^{k} \leq 1
\end{equation*} for $|z|=r\leq R_1$,
where  $$R_1=\left\{ \begin{array}{*{20}{l}}
\frac{1}{4\alpha-2}, \quad &\beta= \alpha-\frac{1}{2};\\
\quad &\quad\\
\frac{-(\alpha+\beta)+\sqrt {(\alpha+\beta)^2+(1+2\beta-2\alpha)}}{1+2\beta-2\alpha}
, \quad &\beta \neq \alpha-\frac{1}{2}.
\end{array}
 \right.$$}is the best possible.
\end{theorem}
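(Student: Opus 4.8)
The plan is to bound each of the three terms on the left-hand side separately and then reduce the resulting estimate, which depends only on $a=|a_0|$, to a single-variable inequality. First I would apply the Schwarz--Pick lemma (Lemma~\ref{lemma2.2}) with $z_1=0$, $z_2=z$: the function $g(z)=(f(z)-a_0)/(1-\overline{a_0}f(z))$ is a Schwarz map, and inverting gives the sharp estimate $|f(z)|\le (a+r)/(1+ar)$. For the tail I would invoke Lemma~\ref{lemma2.1}, $|a_k|\le 1-a^2$, and sum a geometric series to get $\sum_{k\ge1}|a_k|r^k\le (1-a^2)r/(1-r)$, which is legitimate since $r<1$. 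Substituting both bounds shows the left-hand side is at most
$$\Phi(a):=\alpha\frac{a+r}{1+ar}+(1-\alpha)a+\beta(1-a^2)\frac{r}{1-r}.$$

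The key simplification is the identity $(a+r)/(1+ar)=a+r(1-a^2)/(1+ar)$, which collapses the first two terms into $a+\alpha r(1-a^2)/(1+ar)$. After this, $\Phi(a)-1$ carries the common factor $1-a$, and I would write
$$\Phi(a)-1=(1-a)\left[-1+\frac{\alpha r(1+a)}{1+ar}+\frac{\beta r(1+a)}{1-r}\right]=:(1-a)\,G(a).$$
Since $1-a\ge 0$ on $[0,1]$, it suffices to prove $G(a)\le 0$. A direct differentiation gives $G'(a)=\alpha r(1-r)/(1+ar)^2+\beta r/(1-r)>0$, so $G$ is increasing and is maximized at $a=1$. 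The whole problem therefore reduces to $G(1)=2\alpha r/(1+r)+2\beta r/(1-r)-1\le 0$. Clearing denominators (multiply by $1-r^2>0$) turns this into the quadratic condition $q(r):=(1+2\beta-2\alpha)r^2+2(\alpha+\beta)r-1\le 0$. Solving $q(r)=0$ yields the critical radius, the case split in the statement corresponding exactly to whether the leading coefficient $1+2\beta-2\alpha$ vanishes: when it does, the equation is linear, and otherwise the quadratic formula gives the stated $R_1$, with care needed about the sign of that coefficient when selecting the relevant positive root.

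The part requiring the most care, and the point I expect to be the main obstacle, is sharpness. The upper bound chains two estimates, but Lemma~\ref{lemma2.1} cannot hold with equality for all $k$ at once, so no single function makes the entire chain tight and a naive extremal map does not exist. Instead I would test the M\"obius family $f_a(z)=(a-z)/(1-az)$, whose coefficients satisfy $|a_k|=a^{k-1}(1-a^2)$, evaluated at $z=-r$ where $|f_a(-r)|=(a+r)/(1+ar)$. This gives the exact value $a+(1-a^2)r\left[\alpha/(1+ar)+\beta/(1-ar)\right]$, and subtracting $1$ again factors as $(1-a)$ times a bracket that tends to $G(1)$ as $a\to 1$. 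Hence for any fixed $r\in(R_1,1)$ one has $G(1)>0$, since $G(1)$ is increasing in $r$ and vanishes at $R_1$, so the bracket is positive for $a$ close enough to $1$ and the inequality fails for $f_a$. It is this limiting argument, rather than a single extremal function, that establishes that $R_1$ cannot be improved.
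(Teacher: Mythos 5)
Your proposal is correct and follows essentially the same route as the paper's proof: the same two bounds ($|f(z)|\le\frac{a+r}{1+ar}$ from Schwarz--Pick and $|a_k|\le 1-a^2$ from Lemma~2.1 summed geometrically), the same extraction of the factor $1-a$ reducing everything to the quadratic condition $(1+2\beta-2\alpha)r^2+2(\alpha+\beta)r-1\le 0$ with the identical case split on the leading coefficient, and the same sharpness argument via the family $f_a(z)=\frac{a-z}{1-az}$ at $z=-r$ with the limit $a\to 1^-$ in place of a single extremal function. Your deviations are only cosmetic streamlinings of the same argument: collapsing $\alpha\frac{a+r}{1+ar}+(1-\alpha)a$ through the identity $\frac{a+r}{1+ar}=a+\frac{r(1-a^2)}{1+ar}$ makes the monotonicity of your bracket $G(a)$ immediate where the paper differentiates its $g(a)$ twice, and your observation that $G(1)$ increases in $r$ and vanishes at $R_1$ substitutes for the paper's direct check that $\rho(1)=g(1)>0$ when $r>R_1$.
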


\begin{proof}
According to the assumption,  Lemma\ref{lemma2.1} and \ref{2.2}, we obtain
\begin{align}\label{3.1}
|f(z)|\leq\frac{|z|+a}{1+a|z|}\quad for \quad z\in\mathbb{D},\\
\label{3.2}
|a_k| \leq 1 - |a_0|^2 \quad for \quad k = 1,2,....
\end{align}
Using the above two inequalities (\ref{3.1})and (\ref{3.2}), we have
\begin{align*}
&\alpha|f(z)|+(1-\alpha)a+\beta\sum_{k=1}^{\infty}|a_{k}||z|^{k}\\
\leq&\alpha(\frac{r+a}{1+ar})+(1-\alpha)a+\beta(1-a^2)\frac{r}{1-r}\\
=&\frac{\alpha(a+r)(1-r)+(1-\alpha)a(1+ar)(1-r)+\beta(1-a^2)(1+ar)r}{(1+ar)(1-r)}.
\end{align*}
Let $$A_1 (a,r,\alpha,\beta)=\frac{\alpha(a+r)(1-r)+(1-\alpha)a(1+ar)(1-r)+\beta(1-a^2)(1+ar)r}{(1+ar)(1-r)}.$$
Now, we need to show that $A_1 (a,r,\alpha,\beta)\leq1$ holds for $r\leq R_{1}$. It is equivalent to show $B_1 (a)\leq0$ for $r\leq R_{1}$, where
\begin{align}\label{3.3}
B_1 (a)=&\alpha(a+r)(1-r)+(1-\alpha)a(1+ar)(1-r)+\beta(1-a^2)(1+ar)r-(1+ar)(1-r)\notag\\
=&(1-a)g(a),
\end{align}
where $$g(a)=(\alpha r+\alpha ra-1-ar)(1-r)+\beta (1+a)r(1+ar).$$
Direct computations lead to
\begin{align}\label{3.4}
g'(a)&=(\alpha r-r)(1-r)+\beta r(1+ar)+\beta (1+a)r^2,\nonumber\\
g''(a)&=2\beta r^2>0.
\end{align}
The inequality (\ref{3.4}) implies that $g'(a)\geq g'(0) = r[(1-\alpha+\beta)r+(\alpha+\beta-1)]>(1-\alpha+\beta)r^2>0.$ Therefore, $g(a)$ increases  monotonically on the interval $[0,1)$ and
\begin{align}\label{3.5}
g(a)\leq g(1)=(1+2\beta-2\alpha)r^2+(2\alpha+2\beta)r-1.
\end{align}
 Observe that
 $\Delta :=(2\alpha+2\beta)^2+4(1+2\beta-2\alpha)>0$ for arbitrary $  \alpha\in (0,1]$ and $\beta\in (1-\alpha,\infty)$. Then
 the equation $(1+2\beta-2\alpha)r^2+(2\alpha+2\beta)r-1 = 0$ has two real roots

\begin{align*}{r_{1, \alpha, \beta}} =\frac{-(\alpha+\beta)+\sqrt {(\alpha+\beta)^2+(1+2\beta-2\alpha)}}{1+2\beta-2\alpha}\end{align*}and
\begin{align*}{r_{2, \alpha, \beta}} = \frac{-(\alpha+\beta)-\sqrt {(\alpha+\beta)^2+(1+2\beta-2\alpha)}}{1+2\beta-2\alpha}\end{align*}
with $1+2\beta-2\alpha \neq 0$.

Now we divide it into two cases to discuss.

{\bf Case 1.} If $\alpha\in(0,\frac{3}{4}]$, then   $1+2\beta-2\alpha>0$. It follows  that $r_{2, \alpha, \beta} < 0 < r_{1,\alpha,\beta}<1 $ and $g(1)\leq 0$ for $r \in [0,r_{1, \alpha, \beta}]$. Taking $R_1=r_{1,\alpha,\beta}$, then (\ref{3.3}) and (\ref{3.5}) lead to that $B_1 (a)\leq0 $ holds for $ r \leq R_{1}$.

{\bf Case 2.}  If $\alpha\in(\frac{3}{4},1]$, we divide it into three subcases to discuss.

{\bf Subcase 1.} $1+2\beta-2\alpha=0$ if $\beta = \alpha-\frac{1}{2}$. Then $g(1)=(4\alpha-2)r-1$ and $g(1)$ is monotonically increasing with respect to $r$. Taking $R_1=\frac{1}{4\alpha-2}$, then (\ref{3.3}) and (\ref{3.5}) lead to that $B_1 (a)\leq0 $ holds for $ r \leq R_{1}$.

{\bf Subcase 2.} $1+2\beta-2\alpha<0$ if $\beta < \alpha-\frac{1}{2}$. Then $ 0 < r_{1, \alpha, \beta} < r_{2,\alpha,\beta}<1 $. Taking $R_1= r_{1, \alpha, \beta}$, then (\ref{3.3}) and (\ref{3.5}) lead to that $B_1 (a)\leq0 $ holds for $ r \leq R_{1}$.

{\bf Subcase 3.} $1+2\beta-2\alpha > 0$ if $\beta > \alpha-\frac{1}{2}$. Taking $R_1= r_{1, \alpha, \beta}$, then we can obtain that $B_1 (a)\leq0 $ holds for $ r \leq R_{1}$ just like the case 1.

To show the sharpness of the number $R_1$, we consider the function
\begin{equation*}
f(z) = \frac{a - z}{1 - az} = a - (1 - a^2)\sum_{k = 1}^\infty  a^{k - 1}z^k ,\quad
z \in \mathbb{D}.
\end{equation*}
Taking $z=-r$, for this function, we have
\begin{align*}
&\alpha|f( - r)| + (1-\alpha)a+\beta\sum_{k = 1}^\infty  |a_k|r^k \\
= & \alpha\frac{a + r}{1 + ar} + (1 - \alpha)a + \beta(1 - a^2)\frac{r}{1 - ar} \\
= & \frac{\alpha(a + r)(1 - ar) + (1 - \alpha)a(1 + ar)(1 - ar) + \beta(1 - a^2)(1+ar)r}{(1 - a^2r^2)}.
\end{align*}
Let $$H(a,r,\alpha,\beta)=\frac{\alpha(a + r)(1 - ar) + (1 - \alpha)a(1 + ar)(1 - ar) + \beta(1 - a^2)(1+ar)r}{(1 - a^2r^2)}.$$
Now we just need to show that if $r>R_1$, then there  exists an $a$ such that $H(a,r,\alpha,\beta)>1$. That is equivalent to show $h (a)>0$ for $r>R_1$, where
\begin{align*}
h(a) &=\alpha(a + r)(1 - ar) + (1 - \alpha)a(1 + ar)(1 - ar) + \beta(1 - a^2)(1+ar)r-(1-ar)(1+ar) \\
&=(1 - a)[(1-\alpha +\beta)r^2a^2+(\alpha r+\beta r-\alpha r^2+\beta r^2)a+(\alpha r+\beta r-1)].
\end{align*}
Let $$\rho (a)=(1-\alpha +\beta)r^2a^2+(\alpha r+\beta r-\alpha r^2+\beta r^2)a+(\alpha r+\beta r-1).$$ It follows that  $\rho '(a)=2(1-\alpha +\beta)r^2 a+\alpha r+\beta r-\alpha r^2+\beta r^2 \geq 0$. This means that $\rho (a) < \rho (1) =(1+2\beta-2\alpha)r^2+(2\alpha+2\beta)r-1 $ for each fixed $\alpha \in (0,1]$, $\beta\in (1-\alpha,\infty)$ and $r \in [0,1)$. Observe that  $\rho (1)=g(1)$. So, if $r > R_1$, then $\rho (1)>0$. Thus, by the continuity of $\rho (a)$, we have $$\lim_{a\rightarrow 1^-} \rho (a) = \rho (1) > 0.$$
Therefore, if $r > R_1$ , then there exists an $a\in [0,1)$ such that $\rho (a)>0$, namely $h(a)>0$. This proves the sharpness of $r = R_1$.
\end{proof}

\begin{remark}
\rm{(1) If $\alpha = 1$ and $\beta=1$, then Theorem 3.1 reduces to Theorem 1.1.\\
(2) If $\alpha = 1$ and $\beta\in(0,\infty)$, then Theorem 3.1 reduces to Corollary 3.1 of \cite{wu2022some}.}
\end{remark}

\begin{theorem}\label{theorem3.2}
\rm {Suppose that $f(z)\in \mathcal{BA}(\mathbb{D})$. Then for arbitrary $\alpha \in (0,1]$ and $\beta\in (0,\infty)$, it holds that
\begin{equation*}
\alpha|f(z)|+\beta \sum_{k=1}^{\infty}|a_{k}||z|^{k} \leq 1
\end{equation*} for $|z|=r\leq R_2$,
where  $$R_2 = \left\{ \begin{array}{*{20}{l}}
\frac{\alpha}{\alpha+2}, \quad &\beta= \frac{1}{2};\\
\quad &\quad\\
\frac{(\alpha+2\beta+1)-\sqrt{(\alpha+2\beta+1)^2-4\alpha(1-2\beta)}}{2(1-2\beta)}
, \quad &\beta \neq \frac{1}{2}.
\end{array}
 \right.$$}
\end{theorem}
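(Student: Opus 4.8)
The plan is to mimic the opening of the proof of Theorem \ref{theorem3.1}. Since the present left-hand side is obtained from that of Theorem \ref{theorem3.1} by deleting the term $(1-\alpha)a$, I would again invoke the Schwarz--Pick estimate (\ref{3.1}) together with Lemma \ref{lemma2.1} to obtain
\[
\alpha|f(z)|+\beta\sum_{k=1}^{\infty}|a_{k}||z|^{k}\le \alpha\frac{r+a}{1+ar}+\beta(1-a^{2})\frac{r}{1-r}=:A_{2}(a,r,\alpha,\beta),
\]
where $a=|a_{0}|\in[0,1)$. Clearing the positive denominator $(1+ar)(1-r)$, the desired bound $A_{2}\le 1$ is equivalent to $B_{2}(a)\le 0$ on $[0,1)$, where
\[
B_{2}(a)=\alpha(a+r)(1-r)+\beta(1-a^{2})(1+ar)r-(1+ar)(1-r).
\]

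The essential difference from Theorem \ref{theorem3.1} is that here $B_{2}$ no longer has $(1-a)$ as a factor (indeed $B_{2}(1)=(\alpha-1)(1-r^{2})$, which is nonzero when $\alpha<1$), so the factorization $B_1(a)=(1-a)g(a)$ cannot be copied verbatim. Instead I would treat $B_{2}$ as a \emph{concave} function of $a$. A direct computation gives $B_{2}''(a)=-2\beta r(1+3ar)<0$ for $a\in[0,1]$, so $B_{2}'$ is strictly decreasing on $[0,1]$ and hence attains its minimum there at the right endpoint. The key computation is the value of that endpoint derivative, namely
\[
B_{2}'(1)=(1-2\beta)r^{2}-(\alpha+2\beta+1)r+\alpha,
\]
which is exactly the quadratic whose relevant root is $R_{2}$ (degenerating to the linear expression $\alpha-(\alpha+2)r$ when $\beta=\tfrac12$).

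From here the argument closes quickly. I would show that $B_{2}'(1)\ge 0$ precisely for $r\le R_{2}$: when $\beta=\tfrac12$ this is the linear inequality $r\le \alpha/(\alpha+2)$, and when $\beta\ne\tfrac12$ one checks, distinguishing the signs of $1-2\beta$ as in Lemma \ref{lemma2.4} and in Case~2 of Theorem \ref{theorem3.1}, that $R_{2}$ is the smaller positive root and that the quadratic stays nonnegative on $[0,R_{2}]$ (note $B_2'(0)=\alpha>0$). Since $B_{2}'$ is decreasing, $B_{2}'(1)\ge 0$ forces $B_{2}'(a)\ge B_{2}'(1)\ge 0$ for every $a\in[0,1]$, so $B_{2}$ is nondecreasing and therefore $B_{2}(a)\le B_{2}(1)=(\alpha-1)(1-r^{2})\le 0$ because $\alpha\le 1$. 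This yields $A_{2}\le 1$ and proves the theorem for $|z|=r\le R_{2}$.

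The main obstacle is the reduction in the second paragraph: without the $(1-a)$ factor one must find the correct qualitative handle on the cubic $B_{2}$, and the fortunate fact that $B_{2}'(1)$ reproduces exactly the defining quadratic of $R_{2}$ is what makes the concavity argument succeed. I would also remark that, unlike Theorem \ref{theorem3.1}, no extremal function is expected to make this estimate sharp: the terminal bound $B_{2}(1)=(\alpha-1)(1-r^{2})$ is strictly negative for $\alpha<1$, so there is genuine slack and $R_{2}$ is not claimed to be best possible.
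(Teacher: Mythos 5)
Your proposal is correct and takes essentially the same route as the paper's own proof: the identical reduction via (\ref{3.1}) and (\ref{3.2}) to the same $B_2(a)$, the same concavity observation $B_2''(a)=-2\beta r(1+3ar)<0$ forcing the minimum of $B_2'$ at $a=1$, the same endpoint quadratic $B_2'(1)=(1-2\beta)r^2-(\alpha+2\beta+1)r+\alpha$ with the same case split on $\beta=\tfrac{1}{2}$, and the same conclusion $B_2(a)\le B_2(1)=(\alpha-1)(1-r^2)\le 0$. Your closing remark that no sharpness is claimed matches the paper, which indeed states Theorem \ref{theorem3.2} without an extremal-function argument.
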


\begin{proof}
Let $|a_0|=a$. Inequalities (\ref{3.1}) and (\ref{3.2}) lead to that
\begin{align*}
&\alpha|f(z)|+\beta\sum_{k=1}^{\infty}|a_{k}||z|^{k}\\
\leq&\alpha(\frac{r+a}{1+ar})+\beta(1-a^2)\frac{r}{1-r}\\
=&\frac{\alpha(a+r)(1-r)+\beta(1-a^2)(1+ar)r}{(1+ar)(1-r)}.
\end{align*}
Let $$A_2 (a,r,\alpha,\beta)=\frac{\alpha(a+r)(1-r)+\beta(1-a^2)(1+ar)r}{(1+ar)(1-r)} .$$
Now, we need to show that $A_2 (a,r,\alpha,\beta)\leq1$ holds for $r\leq R_2$. It is equivalent to show $B_2 (a)\leq0$ holds for $r\leq R_2$, where
\begin{align*}
B_2 (a)=\alpha(a+r)(1-r)+\beta(1-a^2)r(1+ar)-(1+ar)(1-r).
\end{align*}
It follows that
\begin{align}\label{3.6}
B_2 '(a)=&\alpha(1-r)-2\beta ar(1+ar)+\beta r^2(1-a^2)-r(1-r),\notag\\
B_2 ''(a)=&-2\beta r(1+ar)-4\beta r^{2}a=-6\beta r^{2}a-2\beta r<0.
\end{align}
The  inequality(\ref{3.6}) implies that $$B_2 '(a)> B_2 '(1) = (1-2\beta)r^2 - (\alpha+2\beta+1)r + \alpha.$$\\
Let $$u(r)=(1-2\beta)r^2 - (\alpha+2\beta+1)r + \alpha.$$ If $\alpha \in (0,1]$ and $\beta \neq \frac{1}{2}$, observe that $\Delta:=(\alpha+2\beta+1)^2-4\alpha(1-2\beta)>0$. Then the equation $u(r)=0$ has two real roots
\begin{align*}r_{3,\alpha,\beta}=\frac{(\alpha+2\beta+1)+\sqrt{(\alpha+2\beta+1)^2-4\alpha(1-2\beta)}}{2(1-2\beta)}\end{align*}and
\begin{align*}r_{4,\alpha,\beta}=\frac{(\alpha+2\beta+1)-\sqrt{(\alpha+2\beta+1)^2-4\alpha(1-2\beta)}}{2(1-2\beta)}.\end{align*}
Next, we divide it into three cases to discuss.

	{\bf Case 1.} If $\beta\in(0,\frac{1}{2})$, then $1-2\beta>0$ . By simple calculations we obtain that $0< r_{4,\alpha,\beta}< 1<r_{3,\alpha,\beta}$. It follows that $u(r)\geq 0$ holds for $r \leq r_{4,\alpha,\beta}$. Then we have $B_2 '(a)> B_2 '(1)\geq0$. Taking $R_2=r_{4,\alpha,\beta}$, we have $ B_2 (a)< B_2 (1)=(\alpha-1)(1+r)(1-r)\leq 0$ for $r\in[0,R_2]$.

	{\bf Case 2.}  If $\beta\in(\frac{1}{2},\infty)$, then $1-2\beta<0$. So  $r_{3,\alpha,\beta} < 0 < r_{4,\alpha,\beta} < 1$ and $B_2 '(a)> B_2 '(1)=u(r)\geq0$ holds for $r \leq r_{4,\alpha,\beta}$. Taking $R_2=r_{4,\alpha,\beta}$, we have $ B_2 (a)< B_2 (1)=(\alpha-1)(1+r)(1-r)\leq 0$ for $r\in[0,R_2]$.

{\bf Case 3.}  If $\beta=\frac{1}{2}$, then $u(r)=-(\alpha+2)r+\alpha$. Observe that $u(r)$ is a monotonically decreasing function of $r$. Taking $R_2=\frac{\alpha}{\alpha+2}$. Then if $r\in[0,R_2]$, we  can also obtain that $B_2 '(a)> B_2 '(1)=u(r)\geq 0$  and $ B_2 (a)< B_2 (1)=(\alpha-1)(1+r)(1-r)\leq 0$ .

\end{proof}

\begin{theorem}\label{theorem3.3}
\rm{Suppose that $f(z)\in \mathcal{BA}(\mathbb{D})$. Then for arbitrary  $\alpha\in (0,1]$ and $\beta\in [\alpha,\infty)$, it holds that
\begin{equation*}
\alpha|f(z)|+\beta\sum_{k=1}^{\infty}|\frac{{f^{(k)}(z)}}{k!}||z|^{k}\leq 1
\end{equation*}
for $|z|=r\leq R_{3}<\frac{1}{2}$,
where  $R_3 =\min(r^*_1,r^{*}_{3})$. The radii  $r^{*}_{1}$ and $r^{*}_{3}$ are the unique positive real roots of the equations $$2\alpha r^2+(2\beta+\alpha)r-\alpha=0$$ and $$(4-2\alpha)r^3+(2+2\beta -3\alpha)r^2-(2\beta+2)r+\alpha=0$$ in the interval $(0,\frac{1}{2})$, respectively.}
\end{theorem}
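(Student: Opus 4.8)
The plan is to reduce the two-term expression to a single scalar inequality by applying the two available pointwise bounds, exactly as in the proofs of Theorems \ref{theorem3.1} and \ref{theorem3.2}, and then to analyze the resulting rational function of $a:=|a_0|$. First I would bound the leading term by the Schwarz--Pick estimate (\ref{3.1}), namely $|f(z)|\le\frac{r+a}{1+ar}$. For the derivative part I would invoke Lemma \ref{lemma2.3} termwise, which gives
$$\beta\sum_{k=1}^\infty\left|\frac{f^{(k)}(z)}{k!}\right|r^k\le\beta(1-|f(z)|^2)\sum_{k=1}^\infty\frac{(1+r)^{k-1}r^k}{(1-r^2)^k}.$$
The geometric series telescopes to $\frac{r}{(1+r)(1-2r)}$, and this is exactly where the restriction $r<\tfrac12$ (hence $R_3<\tfrac12$) enters: the ratio $\frac{r}{1-r}$ must be less than $1$ for convergence. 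Writing $C=C(r)=\frac{r}{(1+r)(1-2r)}$ and $t=|f(z)|$, the whole problem collapses to proving $\Phi(t):=\alpha t+\beta C(1-t^2)\le 1$.

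The delicate point, and the reason two parameters genuinely complicate matters, is that $t=|f(z)|$ enters the first term increasingly but the derivative term decreasingly (through $1-t^2$), so the two contributions cannot be maximized independently; moreover $t$ is itself constrained by $t\le\frac{r+a}{1+ar}$. Following the pattern of the earlier proofs, I would substitute the Schwarz--Pick value into $\Phi$, clear the positive denominator $(1+ar)^2(1-2r)$, and study the polynomial
$$B_3(a)=\alpha(a+r)(1+ar)(1-2r)+\beta(1-a^2)(1-r)r-(1+ar)^2(1-2r),$$
the sign condition $B_3(a)\le 0$ on $[0,1]$ being equivalent to the desired inequality. A direct check gives $B_3(1)=(1+r)^2(1-2r)(\alpha-1)\le 0$, which disposes of the endpoint $a=1$ for every admissible $\alpha$.

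The heart of the argument is then the behaviour of $B_3$ on the interior of $[0,1]$. Here $B_3$ is quadratic in $a$, and I would determine the sign of its leading coefficient and locate its vertex. The transition points of this analysis are precisely the roots in $(0,\tfrac12)$ of the two equations in the statement: the quadratic $2\alpha r^2+(2\beta+\alpha)r-\alpha=0$, whose root $r_1^*$ marks where the coupling factor $\alpha-2\beta C$ changes sign (equivalently where the unconstrained maximiser $t^\ast=\frac{\alpha}{2\beta C}$ of $\Phi$ meets the boundary $t=1$), and the cubic $(4-2\alpha)r^3+(2+2\beta-3\alpha)r^2-(2\beta+2)r+\alpha=0$, which I expect to factor as $(1+r)(1-2r)[\alpha(1+r)-2r]=2\beta r(1-r)$. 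Ensuring $B_3\le 0$ for \emph{all} $a\in[0,1]$ simultaneously forces $r$ below both thresholds, so one sets $R_3=\min(r_1^*,r_3^*)$. Lemma \ref{lemma2.4} supplies existence and uniqueness of $r_1^*$ in $(0,\tfrac12)$; the analogous facts for the cubic root $r_3^*$ would be obtained by the same elementary sign analysis (evaluating the cubic at $0$ and $\tfrac12$ and checking monotonicity). The main obstacle I anticipate is controlling the interior maximum of $B_3$ cleanly enough that this two-threshold description emerges; unlike Theorem \ref{theorem3.1}, no sharpness is claimed here, so it suffices to produce this (possibly non-optimal) radius rather than to exhibit an extremal function.
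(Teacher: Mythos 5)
Your proposal reproduces the paper's argument essentially step for step: termwise application of Lemma \ref{lemma2.3} and summation of the geometric series to $\frac{r}{(1+r)(1-2r)}$ (valid for $r<\frac12$), monotonicity of $\Phi(t)=\alpha t+\beta C(r)(1-t^2)$ on $[0,1)$ for $r\le r_1^*$ to justify substituting the Schwarz--Pick bound, and then the sign analysis of the quadratic $B_3(a)$ --- concavity in $a$ (the leading coefficient is $r\,s(r)$ with $s(r)=2r^2+(\beta-2\alpha-1)r+\alpha-\beta\le 0$ on $[0,\frac12]$ since $\beta\ge\alpha$) combined with $B_3'(1)=p(r)\ge 0$ for $r\le r_3^*$, where your factorization $(1+r)(1-2r)[\alpha(1+r)-2r]-2\beta r(1-r)$ is exactly the paper's cubic $p(r)=B_3'(1)$ and is correct, yielding $B_3(a)\le B_3(1)=(\alpha-1)(1+r)^2(1-2r)\le 0$. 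One small slip: Lemma \ref{lemma2.4} is stated only for $\alpha\in[\frac45,1]$ and $\beta\in(0,\alpha)$, so in the present range $\beta\ge\alpha$ the existence and uniqueness of $r_1^*\in(0,\frac12)$ must come from the elementary sign check you invoke for the cubic (the quadratic $2\alpha r^2+(2\beta+\alpha)r-\alpha$ equals $-\alpha<0$ at $r=0$ and $\beta>0$ at $r=\frac12$), not from that lemma.
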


\begin{proof}
Let $ a:=|a_{0}|$. According to the assumption and Lemma \ref{lemma2.3}, we have
\begin{align*}
&\alpha|f(z)|+\beta\sum_{k=1}^{\infty}|\frac{{f^{(k)}(z)}}{k!}||z|^{k}\\
\leq&\alpha|f(z)|+\beta(1-|f(z)|^{2})\sum_{k=1}^{\infty}\frac{(1+r)^{k-1}}{(1-r^{2})^k}r^k\\
=&\alpha|f(z)|+\beta\frac{(1-|f(z)|^{2})r}{(1+r)(1-2r)}\\
=&\alpha|f(z)|-\frac{\beta r}{(1+r)(1-2r)}|f(z)|^{2}+\frac{\beta r}{(1+r)(1-2r)}
\end{align*}
Let $$Q(|f(z)|)=\alpha |f(z)|-\frac{\beta r}{(1+r)(1-2r)}|f(z)|^{2},\qquad (|f(z)|\in[0,1)).$$ It follows that $Q'(|f(z)|)=\alpha-\frac{2\beta r}{(1+r)(1-2r)}|f(z)|$. By simple calculations we can obtain that $2\alpha r^2+(2\beta+\alpha)r-\alpha\leq0$ holds for $r\leq r^{*}_{1}<\frac{1}{2}$, then $\frac{2\beta r}{(1+r)(1-2r)}\leq\alpha$. Thus, $Q'(|f(z)|)=\alpha-\frac{2\beta r}{(1+r)(1-2r)}|f(z)|>\alpha-\frac{2\beta r}{(1+r)(1-2r)}\geq0$. Therefore, $Q(|f(z)|)$ is a monotonically increasing function of $|f(z)|$  for each fixed $r\in[0, r^{*}_{1}]$.
Using the inequality(\ref{3.1}),
\begin{align*}
&\alpha|f(z)|-\frac{\beta r}{(1+r)(1-2r)}|f(z)|^{2}+\frac{\beta r}{(1+r)(1-2r)}\\
\leq&\alpha(\frac{a+r}{1+ar})+\beta[1-(\frac{a+r}{1+ar})^{2}]\frac{r}{(1+r)(1-2r)}\\
=&\frac{\alpha(a+r)(1+ar)(1-2r)+\beta(1-a^2)(1-r)r}{(1+ar)^2 (1-2r)}.
\end{align*}
Let $$A_3 (a,r,\alpha,\beta)=\frac{\alpha(a+r)(1+ar)(1-2r)+\beta(1-a^2)(1-r)r}{(1+ar)^2 (1-2r)}.$$
Now, we need to show that $A_3 (a,r,\alpha,\beta)\leq1$ holds for $r\leq R_{3}$. It is equivalent to show $B_3 (a)\leq0$ holds for $r\leq R_{3}$, where
\begin{align*}
B_3 (a)=\alpha(a+r)(1+ar)(1-2r)+\beta(1-a^2)(1-r)r-(1+ar)^2 (1-2r).
\end{align*}
Direct computations lead to
\begin{align}\label{3.7}
B_3 '(a)=&\alpha (1 +ar)(1-2r)+\alpha r(a + r)(1 - 2r)-2\beta a(1 - r)r-2r(1 +ar)(1-2r),\notag\\
B_3 ''(a)
=&2rs(r),
\end{align}
where $s(r) =2r^2+(\beta-2\alpha-1)r+\alpha-\beta$.
Observe that $s(0) =\alpha-\beta\leq0$ and $s(\frac{1}{2})=-\frac{\beta}{2}<0$. So,  $s(r)\leq0$ holds for $ r \leq r^{*}_{1}<\frac{1}{2}$. Then (\ref{3.7}) leads to $B_3 ''(a) \leq 0$. It follow  that $B_3 '(a)$ is an decreasing function with respect to $a$. Let $$p(r) =B_3 '(1)=(4-2\alpha)r^3+(2+2\beta -3\alpha)r^2-(2\beta+2)r+\alpha.$$ Observe that $p(0)=\alpha>0$, $p(\frac{1}{2})=-\frac{\beta}{2}<0$ and $p''(r)=12(2-\alpha)r+2(2+2\beta-3\alpha)\geq 0$. This implies that  there is a unique $r^{*}_{3} \in (0,\frac{1}{2})$ such that $p(r^{*}_{3}) = 0$ and $p(r)\geq 0$ holds for $r\leq r^{*}_{3}$. So if $r \in [0,\min(r^{*}_{1},r^{*}_{3})]$, we have $B_3 '(a) > B_3 '(1)=p(r)\geq0$. Therefore,  $B_3 (a)< B_3 (1)=(\alpha-1)(1+r)^2 (1-2r)\leq 0$ holds for $r \in [0,R_3]$.
\end{proof}

\begin{remark}
\rm{(1) If $\alpha = 1$ and $\beta=1$, then Theorem 3.3 reduces to Theorem 1.2.\\
(2) Actually,
\begin{align*}&r^{*}_{1}=\frac{-(2\beta+\alpha)+\sqrt{(2\beta+\alpha)^2+8\alpha^2}}{4\alpha},\\
&r^{*}_{3}=
 \sqrt[3]{-\frac{A}{2}+\frac{1}{27}\sqrt{\frac{A^2+4B^3}{4(4-2\alpha)^3}}}+
 \sqrt[3]{-\frac{A}{2}-\frac{1}{27}\sqrt{\frac{A^2+4B^3}{4(4-2\alpha)^3}}}-\frac{2+2\beta-3\alpha}{3(4-2\alpha)}\end{align*}
with $A=54\alpha^3+16\beta^3+216\alpha^2\beta-144\alpha\beta^2-236\alpha^2+192\beta^2-288\alpha\beta+336\beta+304$ and
$B=-9\alpha^2-4\beta^2+24\alpha\beta+24\alpha-32\beta-4$.}
\end{remark}

Let $\alpha$ and $\beta$ be in different intervals,  we have similar results as follows.

\begin{theorem}\label{theorem3.4}
\rm{Suppose that $f(z)\in \mathcal{BA}(\mathbb{D})$. Then for arbitrary  $\alpha\in [\frac{4}{5},1]$ and $\beta\in (0,\alpha)$, it holds that
\begin{equation*}
\alpha|f(z)|+\beta\sum_{k=1}^{\infty}|\frac{{f^{(k)}(z)}}{k!}||z|^{k}\leq 1
\end{equation*}
for $|z|=r\leq R_{4}<\frac{1}{2}$,
where $ R_{4}= r^{*}_{2}=\frac{(1+2\alpha-\beta)-\sqrt{(\beta-2\alpha-1)^2-8(\alpha-\beta)}}{4}$.}

\end{theorem}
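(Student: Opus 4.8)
The plan is to follow the opening of the proof of Theorem \ref{theorem3.3} essentially verbatim, because the initial reductions are insensitive to the sign of $\alpha-\beta$ and only the parameter ranges differ. Applying Lemma \ref{lemma2.3} and summing the geometric series $\sum_{k=1}^{\infty}\frac{(1+r)^{k-1}}{(1-r^2)^k}r^k=\frac{r}{(1+r)(1-2r)}$ yields
$$\alpha|f(z)|+\beta\sum_{k=1}^{\infty}\left|\frac{f^{(k)}(z)}{k!}\right||z|^{k}\leq \alpha|f(z)|-\frac{\beta r}{(1+r)(1-2r)}|f(z)|^2+\frac{\beta r}{(1+r)(1-2r)}.$$
The quadratic in $|f(z)|$ on the right is increasing once $2\alpha r^2+(2\beta+\alpha)r-\alpha\leq 0$, i.e. for $r\leq r_1^*$; since Lemma \ref{lemma2.4} gives $r_2^*<r_1^*$, restricting to $r\leq R_4=r_2^*$ secures this monotonicity step automatically. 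Inserting the Schwarz--Pick bound (\ref{3.1}) and clearing denominators then reduces the claim to $B_3(a)\leq 0$ for $r\leq R_4$, with exactly the same $B_3(a)$ as in Theorem \ref{theorem3.3}.

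The decisive difference lies in the sign of $B_3''(a)=2r\,s(r)$, where $s(r)=2r^2+(\beta-2\alpha-1)r+(\alpha-\beta)$ is precisely the left-hand side of (\ref{2.2}). Here $\beta<\alpha$, so $s(0)=\alpha-\beta>0$; since $r_2^*$ is the smaller root of (\ref{2.2}) and its companion root exceeds $\tfrac12$ by Lemma \ref{lemma2.4}, we obtain $s(r)\geq 0$, hence $B_3''(a)\geq 0$, throughout $[0,r_2^*]$. Thus $B_3'(a)$ is \emph{increasing} in $a$ --- the reverse of Theorem \ref{theorem3.3} --- so the earlier tactic of bounding $B_3'(a)$ from below by $B_3'(1)=p(r)$ is unavailable, and I would instead bound $B_3'(a)$ from below by the left endpoint $B_3'(0)$.

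It remains to verify $B_3'(0)=(1-2r)\bigl[\alpha(1+r^2)-2r\bigr]\geq 0$, and this is exactly where the hypothesis $\alpha\in[\tfrac45,1]$ is consumed. The quadratic $\alpha r^2-2r+\alpha$ has smaller root $\frac{1-\sqrt{1-\alpha^2}}{\alpha}$, which is $\geq\tfrac12$ precisely when $\alpha\geq\tfrac45$ (with equality at $\alpha=\tfrac45$), so $\alpha(1+r^2)-2r\geq 0$ for all $r\in[0,\tfrac12]$ and therefore $B_3'(0)\geq 0$. Combining, $B_3'(a)\geq B_3'(0)\geq 0$ on $[0,1]$, so $B_3$ is increasing and $B_3(a)\leq B_3(1)=(\alpha-1)(1+r)^2(1-2r)\leq 0$ since $\alpha\leq 1$ and $r<\tfrac12$, which finishes the argument. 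The main obstacle is exactly this reversal of convexity of $B_3$ in $a$: once one recognizes that the controlling endpoint has switched from $a=1$ to $a=0$, the otherwise opaque constraint $\alpha\geq\tfrac45$ reveals itself as the sharp condition making $B_3'(0)$ nonnegative over the full admissible range of $r$.
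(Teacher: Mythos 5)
Your proposal is correct and matches the paper's own argument essentially step for step: the same reduction to $B_3(a)$, the same sign reversal $B_3''(a)=2r\,s(r)\geq 0$ on $[0,r_2^*]$ via Lemma \ref{lemma2.4}, the same lower bound $B_3'(a)\geq B_3'(0)=(1-2r)(\alpha r^2-2r+\alpha)\geq 0$ using $\alpha\geq\tfrac45$, and the same conclusion $B_3(a)\leq B_3(1)=(\alpha-1)(1+r)^2(1-2r)\leq 0$. If anything, you are slightly more explicit than the paper in two places --- noting that $r\leq r_2^*<r_1^*$ is what licenses the monotonicity of $Q(|f(z)|)$, and that $\alpha\geq\tfrac45$ is exactly the threshold making the smaller root of $\alpha r^2-2r+\alpha$ at least $\tfrac12$ --- but these are refinements of the identical route, not a different one.
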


\begin{proof}

The first part of the proof is same as  that of  Theorem \ref{theorem3.3}.  After \ref{3.7},observe that $s(r)$ is decreasing for $r<(1+2\alpha-\beta)/4$ and  $(1+2\alpha-\beta)/4>r^*_2$.  Lemma \ref{lemma2.4} says that  $s(r^{*}_{2}) = 0$ and $r^{*}_{1}> r^{*}_{2}$ for $\alpha\in [\frac{4}{5},1]$ and $\beta\in (0,\alpha)$. Thus we have
  $s(r)\geq0$ for $r\in[0,r^{*}_{2}]$. Then (\ref{3.7}) implies that $ B_3 ''(a) \geq 0$. It follows  that $B_3 '(a)$ is an increasing function with respect to $a$. Let $$B_3'(0)=(1-2r)(\alpha r^2-2r+\alpha)=(1-2r)q(r). $$
   Observe that $q(r)\geq0$ for $r\in[0,\frac{2-\sqrt{4-4\alpha^2}}{2\alpha}]$. Because  $r^{*}_{2}<\frac{1}{2}$ and   it holds that $\frac{2-\sqrt{4-4\alpha^2}}{2\alpha}\geq\frac{1}{2}$ for $\alpha\in [\frac{4}{5},1]$, we have $q(r)>0$  for $r\leq r^{*}_{2}$. Thus $B_3 '(0)>0$. It follows that $B_3 '(a)>0$. Therefore, $B_3 (a)< B_3 (1)=(\alpha-1)(1+r)^2 (1-2r)\leq 0$ for $r \leq r^{*}_{2}$.

\end{proof}

\begin{theorem}\label{theorem3.5}
\rm { Suppose that $f(z)\in \mathcal{BA}(\mathbb{D})$. Let $S_r$ be the area of the image of the subdisk $\mathbb{D}_r=\left\{z\in C:|z|< r\right\}$ under the mapping $f(z)$. Then for arbitrary $ \alpha \in (0,1] $ and $ \beta \in (0,\infty)$,  it holds that
\begin{equation*}
\alpha\sum_{k = 0}^\infty  |a_k||z|^k +\beta\left(\frac{S_r}{\pi }\right) \leq 1
\end{equation*}
for  $|z|=r \leq {R_5}$, where  $$R_5 = \left\{ \begin{array}{*{20}{l}}
\frac{1}{3}, \quad &\beta \in (0,\frac{8\alpha}{9});\\
\quad &\quad\\
\widetilde{R_{5}}
, \quad &\beta \in [\frac{8\alpha}{9},\infty)
\end{array}
 \right.$$ and $\widetilde{R_{5}}$ is the unique positive real root of the equation $$\alpha r^3+\alpha r^2+(4\beta-\alpha)r-\alpha=0$$ in the interval $(0,\frac{1}{3}]$.}
\end{theorem}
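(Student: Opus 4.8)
The plan is to reduce the two-term inequality to the sign of a single polynomial in $a:=|a_0|$ and then run a concavity–plus–endpoint–monotonicity argument; this will automatically produce $R_5$ as the minimum of two competing radii. First I would collect the two elementary bounds. By Lemma \ref{lemma2.1} we have $|a_k|\le 1-a^2$ for $k\ge1$, so the Bohr part satisfies $\alpha\sum_{k=0}^\infty|a_k|r^k\le \alpha a+\alpha(1-a^2)\frac{r}{1-r}$. For the area I would invoke the standard identity $\frac{S_r}{\pi}=\sum_{k=1}^\infty k|a_k|^2 r^{2k}$ together with $|a_k|^2\le(1-a^2)^2$ and $\sum_{k\ge1}kx^k=x/(1-x)^2$ at $x=r^2$, giving $\beta\frac{S_r}{\pi}\le\beta(1-a^2)^2\frac{r^2}{(1-r^2)^2}$. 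Adding these and clearing the positive factor $(1-r^2)^2$ reduces the claim to showing $N(a)\le0$ on $[0,1]$, where
$$N(a)=\alpha a(1-r^2)^2+\alpha(1-a^2)r(1-r^2)(1+r)+\beta(1-a^2)^2r^2-(1-r^2)^2.$$

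The core step is to study $N$ through its derivatives. I would compute $N'''(a)=24\beta r^2 a\ge0$, so $N''(a)=12\beta r^2a^2-4\beta r^2-2\alpha r(1-r^2)(1+r)$ is increasing on $[0,1]$ and is maximized at $a=1$. The decisive computation is the factorization of the boundary values,
$$N''(1)=2r\big[4\beta r-\alpha(1-r)(1+r)^2\big],\qquad N'(1)=\alpha(1-r^2)(1+r)(1-3r),$$
together with $N(1)=(\alpha-1)(1-r^2)^2\le0$. The key observation I want to surface is that $N''(1)=0$ is \emph{exactly} the defining cubic, since $\alpha(1-r)(1+r)^2=\alpha(1+r-r^2-r^3)$, so $4\beta r-\alpha(1-r)(1+r)^2=0$ rearranges to $\alpha r^3+\alpha r^2+(4\beta-\alpha)r-\alpha=0$; its relevant root is $\widetilde{R_5}$.

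With these facts the argument closes cleanly. When $r\le\widetilde{R_5}$ one has $N''(1)\le0$, and since $N''$ is increasing this forces $N''(a)\le0$ throughout $[0,1]$, i.e. $N$ is concave; hence $N'$ is non-increasing, so $N'(a)\ge N'(1)$. When $r\le\frac13$ one has $N'(1)\ge0$. Therefore, as soon as $r\le\min(\frac13,\widetilde{R_5})$ both hold, giving $N'(a)\ge N'(1)\ge0$, so $N$ is non-decreasing and $N(a)\le N(1)\le0$, which is the desired inequality. To match the stated form of $R_5$ I would finally analyze the cubic $g(r)=\alpha r^3+\alpha r^2+(4\beta-\alpha)r-\alpha$: here $g(0)=-\alpha<0$ and $g(\tfrac13)=\tfrac43\big(\beta-\tfrac{8\alpha}{9}\big)$, and $g'(r)=3\alpha r^2+2\alpha r+(4\beta-\alpha)>0$ once $4\beta\ge\alpha$. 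Thus for $\beta\ge\frac{8\alpha}{9}$ the unique root $\widetilde{R_5}$ lies in $(0,\frac13]$ and $\min=\widetilde{R_5}$, while for $\beta<\frac{8\alpha}{9}$ we get $\widetilde{R_5}>\frac13$ and $\min=\frac13$, which is precisely the stated $R_5$.

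The main obstacle is conceptual rather than computational: recognizing that the given cubic is nothing but the equation $N''(1)=0$, and that $R_5$ must therefore be the \emph{minimum} of two radii governing two distinct sign conditions — the value $\frac13$ controlling $N'(1)\ge0$ (monotonicity at the endpoint) and $\widetilde{R_5}$ controlling $N''(1)\le0$ (global concavity). Once this is seen, everything else is routine differentiation and the one-line evaluation of $g$ at $r=\frac13$ that pins the crossover to $\beta=\frac{8\alpha}{9}$.
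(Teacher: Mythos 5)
Your proposal is correct and follows essentially the same route as the paper's proof: the identical reduction to $N(a)=B_5(a)$ via Lemma \ref{lemma2.1} and the area series, the same third-derivative argument ($N'''\ge 0$, so $N''$ peaks at $a=1$), the same endpoint evaluations $N''(1)=2r\bigl[\alpha r^3+\alpha r^2+(4\beta-\alpha)r-\alpha\bigr]$, $N'(1)=\alpha(1-r^2)(1+r)(1-3r)$, $N(1)=(\alpha-1)(1-r^2)^2\le 0$, and the same crossover computation at $\beta=\frac{8\alpha}{9}$. Your packaging of the paper's three $\beta$-cases as $R_5=\min\bigl(\frac13,\widetilde{R_5}\bigr)$ is a clean equivalent; the only point to make explicit is the regime $4\beta<\alpha$, where $g'(0)<0$ so your monotonicity remark does not apply --- there you should invoke $g''(r)=6\alpha r+2\alpha>0$ (convexity), so that $g(0)<0$ and $g(\frac13)<0$ still force $g\le 0$ on $[0,\frac13]$, exactly as in the paper's Case 1.
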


\begin{proof}
The definition of ${S_r}$  implies  that
\begin{align}\label{3.8}
\frac{S_r}{\pi } = \frac{\iint_{|z| < r} |f'(z)|^2dxdy}{\pi } = \sum_{k = 1}^\infty  k|a_k|^2r^{2k}  \leq (1 - |a_0|^2)^2\sum_{k = 1}^\infty  kr^{2k}  = \frac{(1 - |a_0|^2)^2r^2}{(1 - r^2)^2}.
\end{align}
Let $a:=|a_{0}|$. Using the above inequalities (\ref{3.2}) and (\ref{3.8}) , we have
\begin{align*}
&\alpha\sum_{k = 0}^\infty  |a_k|r^k +\beta\left(\frac{S_r}{\pi}\right)\notag\\
\leq & \alpha a + \alpha(1 - a^2)\frac{r}{1 - r} + \beta(1 - a^2)^2\frac{r^2}{(1 - r^2)^2}\notag\\
=& \frac{\alpha a(1 - r^2)^2 + \alpha r(1 - a^2)(1 + r)(1 - r^2) +\beta r^2(1 - a^2)^2}{(1 - r^2)^2}.
\end{align*}
Let $$A_5 (a,r,\alpha,\beta)=\frac{\alpha a(1 - r^2)^2 + \alpha r(1 - a^2)(1 + r)(1 - r^2) +\beta r^2(1 - a^2)^2}{(1 - r^2)^2}.$$
It is sufficient for us to prove that $A_5 (a,r,\alpha,\beta)\leq1$ holds for $r \leq R_5$. Actually, we just need to prove $B_5 (a) \leq 0$ for $r \leq R_5$, where
\begin{align*}
 B_5 (a) &= \alpha a(1 - r^2)^2 + \alpha r(1 - a^2)(1 + r)(1 - r^2) +\beta r^2(1 - a^2)^2-(1 - r^2)^2.
\end{align*}
It follows that
\begin{align}\label{3.9}
&B_5 '(a)=\alpha (1 - r^2)^2-2a\alpha r(1 + r)(1 - r^2)-4\beta r^2a(1 - a^2),\notag\\
&B_5 ''(a)=-2\alpha r(1 + r)(1 - r^2)-4\beta r^2+12\beta a^2r^2,\notag\\
&B_5 '''(a)=24\beta ar^2>0.
\end{align}
The inequality (\ref{3.9}) implies that
$B_5 ''(a) < B_5 '' (1)= 2r[\alpha r^3+\alpha r^2+(4\beta-\alpha)r-\alpha]$ for  $a\in [0,1)$.
Let $$\varphi(r) = \alpha r^3+\alpha r^2+(4\beta-\alpha)r-\alpha.$$
Then we have
\begin{align}\label{3.10}
\varphi'(r) &=3\alpha r^2+2\alpha r + 4\beta - \alpha,\notag\\
\varphi''(r)& = 6\alpha r+2\alpha>0.
\end{align}
By the inequality (\ref{3.10}), $\varphi '(r)$ is an increasing function of $r\in[0,1)$. Observe that $\varphi'(0)=4\beta-\alpha$ and $\varphi'(\frac{1}{3})=4\beta>0$.

 Now we divide it into three cases to discuss.

{\bf Case 1.} If $\beta\in(0,\frac{\alpha}{4}]$, then $\varphi'(0)=4\beta-\alpha\leq0$. Since $\varphi'(0)\varphi'(\frac{1}{3})\leq0$, there exists a unique real root $r^*_{\alpha,\beta}\in(0,\frac{1}{3}]$ , such that $\varphi'(r^*_{\alpha,\beta})=0$.  So $\varphi(r)$ is decreasing in $[0,r^*_{\alpha,\beta})$ and increasing in $ (r^*_{\alpha,\beta},\frac{1}{3}]$. Observe that $$\varphi (0)=-\alpha<0 \,\,\, \mbox{and}\,\,\,  \varphi (\frac{1}{3})=\frac{4(9\beta-8\alpha)}{27}<0.$$ It implies that $\varphi(r)\leq0$ holds for $r\leq\frac{1}{3}$. It follows that $ B_5 ''(a) < B_5 ''(1)=2r\varphi (r) \leq 0$ and $B_5 '(a) > B_5 '(1)=(1+r)(1-r^2)(1-3r)>0$. Thus $B_5 (a) \leq B_5 (1) = 0$ holds for $r \in [0,\frac{1}{3}]$.

{\bf Case 2.}  If $\beta\in(\frac{\alpha}{4},\frac{8\alpha}{9})$, then $\varphi '(0) =-\alpha+4\beta>0$. Inequality (\ref{3.10}) implies that $\varphi'(r)\geq \varphi'(0)>0$. So, $\varphi(r)$ is an increasing function for $r\in [0,\frac{1}{3}]$. Because $\varphi (\frac{1}{3})=\frac{4(9\beta-8\alpha)}{27}<0$,  $\varphi(r)\leq0$ holds for $r\leq\frac{1}{3}$. Similar to that of Case 1, we also have  $B_5 (a) \leq B_5 (1) = 0$ holds for $r \in [0,\frac{1}{3}]$.

 {\bf Case 3.}  If $ \beta \in[\frac{8\alpha}{9},\infty)$, then $\varphi '(0) =-\alpha+4\beta>0$. By the inequality(\ref{3.10}), we have $\varphi'(r)\geq \varphi'(0)>0 $. So that $\varphi(r)$ is an increasing function for $r\in [0,\frac{1}{3}]$. Because $$\varphi (0)=-\alpha<0 \,\, \mbox{and}\,\,\varphi (\frac{1}{3})=\frac{4(9\beta-8\alpha)}{27}\geq0,$$   there is a unique $ \widetilde{R_{5}}\in(0,\frac{1}{3}]$ such that $\varphi(\widetilde{R_{5}})=0$ and $\varphi(r)\leq0$ holds for $r\leq \widetilde{R_{5}}$. Similarly, if $r \in [0,\widetilde{R_{5}}]$, $B_4 (a) \leq B_5 (1) = 0$ holds.\\
\end{proof}

\begin{remark}
\rm {(1) If $\alpha\in(0,1]$ and $\beta=1-\alpha$, then Theorem 3.5 reduces to Theorem 1.3.

(2)  Actually, $\widetilde{R_{5}}=-\frac{1}{3}+\sqrt[3]{\frac{8\alpha+18\beta}{27\alpha}
 +\frac{2}{3}\sqrt{\frac{\beta(8\alpha^2-13\alpha\beta+16\beta^2)}{3\alpha}}}
 +\sqrt[3]{\frac{8\alpha+18\beta}{27\alpha}
 -\frac{2}{3}\sqrt{\frac{\beta(8\alpha^2-13\alpha\beta+16\beta^2)}{3\alpha}}}$.  }
\end{remark}

\begin{theorem}\label{theorem3.6}
\rm{Suppose that $f(z)\in \mathcal{BA}(\mathbb{D})$. Let $S_r$ be the area of the image of the subdisk $\mathbb{D}_r=\left\{z\in C:|z|< r\right\}$ under the mapping $f$. Then for arbitrary $ \alpha \in (0,1]$ and $ \beta \in (0, \infty) $, it holds that
\begin{equation*}
\alpha\sum_{k = 0}^\infty  |a_k||z|^k  + (1 - \alpha)a+\beta(\frac{S_r}{\pi }) \leq 1
\end{equation*}
for $|z|= r \leq {R_6}$,where  $$R_6 = \left\{ \begin{array}{*{20}{l}}

\frac{1}{2\alpha+1}, \quad &\beta \in (0,\frac{2\alpha^2(\alpha+1)^2}{(2\alpha+1)^2});\\
\quad &\quad\\
\widetilde{R_6}, \quad &\beta \in [\frac{2\alpha^2(\alpha+1)^2}{(2\alpha+1)^2},\infty)
\end{array}
 \right.$$and the radius $\widetilde{R_6}$ is the unique positive root of the equation $$\alpha r^3+\alpha r^2+(4\beta-\alpha)r-\alpha=0$$ in the interval $(0,\frac{1}{2\alpha+1}]$.}
\end{theorem}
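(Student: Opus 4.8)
The plan is to follow the proof of Theorem \ref{theorem3.5} almost verbatim; the only genuinely new feature is the extra term $(1-\alpha)a$. First I would apply the coefficient bound (\ref{3.2}) and the area estimate (\ref{3.8}) to obtain
\[
\alpha\sum_{k=0}^\infty|a_k|r^k+(1-\alpha)a+\beta\frac{S_r}{\pi}\le \alpha a+(1-\alpha)a+\alpha(1-a^2)\frac{r}{1-r}+\beta(1-a^2)^2\frac{r^2}{(1-r^2)^2}.
\]
The crucial observation is that $\alpha a+(1-\alpha)a=a$, so after clearing the denominator $(1-r^2)^2$ the right-hand side becomes a quotient $A_6(a,r,\alpha,\beta)$ whose numerator is exactly that of $A_5$ with the leading piece $\alpha a(1-r^2)^2$ replaced by $a(1-r^2)^2$. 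It therefore suffices to show that
\[
B_6(a):=a(1-r^2)^2+\alpha r(1-a^2)(1+r)(1-r^2)+\beta r^2(1-a^2)^2-(1-r^2)^2\le0
\]
for $r\le R_6$ and $a\in[0,1)$.

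Next I would differentiate in $a$. Since $B_6$ differs from the $B_5$ of Theorem \ref{theorem3.5} only in its $a$-linear term, the higher derivatives coincide: $B_6'''(a)=24\beta a r^2\ge0$, so $B_6''$ is increasing and $B_6''(a)<B_6''(1)=2r\varphi(r)$ with the same cubic $\varphi(r)=\alpha r^3+\alpha r^2+(4\beta-\alpha)r-\alpha$. Because $\varphi''(r)=6\alpha r+2\alpha>0$ on $[0,\infty)$, $\varphi'$ is increasing, so $\varphi$ has a single local minimum and, together with $\varphi(0)=-\alpha<0$, exactly one positive zero; moreover $\varphi\le0$ on any subinterval of $[0,\infty)$ whose endpoint values are nonpositive.

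The two new computations pin down the radius. Evaluating the first derivative at $a=1$ annihilates the $\beta$-term and factors cleanly,
\[
B_6'(1)=(1-r^2)^2-2\alpha r(1+r)(1-r^2)=(1-r^2)(1+r)\bigl(1-(2\alpha+1)r\bigr),
\]
which is nonnegative exactly for $r\le\tfrac{1}{2\alpha+1}$; this is the source of the threshold radius. Using the factorization $\varphi(r)=\alpha(1+r)^2(r-1)+4\beta r$, the equation $\varphi\bigl(\tfrac{1}{2\alpha+1}\bigr)=0$ reduces to $\beta=\tfrac{2\alpha^2(\alpha+1)^2}{(2\alpha+1)^2}$, identifying the switching value $\beta^\ast$. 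I would then split as in the statement: for $\beta\in(0,\beta^\ast)$ one has $\varphi\bigl(\tfrac{1}{2\alpha+1}\bigr)<0$, so $\varphi\le0$ on all of $[0,\tfrac{1}{2\alpha+1}]$ and $R_6=\tfrac{1}{2\alpha+1}$; for $\beta\ge\beta^\ast$ the binding constraint becomes $\varphi\le0$, forcing $R_6=\widetilde{R_6}$, the unique root of $\varphi$ in $(0,\tfrac{1}{2\alpha+1}]$. In either case, for $r\le R_6$ we have simultaneously $\varphi(r)\le0$ and $1-(2\alpha+1)r\ge0$, whence $B_6''(a)<0$, so $B_6'$ decreases and $B_6'(a)>B_6'(1)\ge0$, so $B_6$ increases and $B_6(a)<B_6(1)=0$. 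The cancellation $B_6(1)=(1-r^2)^2-(1-r^2)^2=0$, a consequence of the leading coefficient being $a$ rather than $\alpha a$, is the structural reason the chain closes.

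I expect the only real obstacle to be the algebraic bookkeeping: correctly factoring $B_6'(1)$ and verifying that $\varphi\bigl(\tfrac{1}{2\alpha+1}\bigr)=0$ reproduces exactly $\beta^\ast=\tfrac{2\alpha^2(\alpha+1)^2}{(2\alpha+1)^2}$. The consistency check $\alpha=1$, which gives $\tfrac{1}{2\alpha+1}=\tfrac{1}{3}$ and $\beta^\ast=\tfrac{8}{9}$ and recovers Theorem \ref{theorem3.5}, is a reassuring sanity test. All remaining steps, namely the convexity of $\varphi$ and the sign chain $B_6'''\ge0\Rightarrow B_6''<B_6''(1)\le0\Rightarrow B_6'>B_6'(1)\ge0\Rightarrow B_6<B_6(1)=0$, are routine and parallel Theorem \ref{theorem3.5} step for step.
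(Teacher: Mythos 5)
Your proposal is correct and is essentially the proof the paper intends: the paper omits the argument with the remark that it is ``similar to Theorem 3.5,'' and your write-up carries out exactly that template, including the two computations the paper leaves implicit, namely $B_6'(1)=(1-r^2)(1+r)\bigl(1-(2\alpha+1)r\bigr)$ giving the threshold $\frac{1}{2\alpha+1}$, and $\varphi\bigl(\frac{1}{2\alpha+1}\bigr)=0$ reducing to $\beta=\frac{2\alpha^2(\alpha+1)^2}{(2\alpha+1)^2}$. Your use of the convexity of $\varphi$ (endpoint checks in place of the three-case analysis of Theorem 3.5) is only a minor streamlining, not a different method.
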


The proof is similar to Theorem 3.5. We omit it.\\

\medskip

{\bf Declarations}

{\bf Availability of data}  The data that support the findings of this study are included in this paper.

{\bf Conflict of interest}   The authors have not disclosed any competing interests.


\begin{thebibliography}{10}

\bibitem{2014Bohr}
Y.~Abu-Muhanna, R.~M. Ali, and Z.~C. Ng.
\newblock Bohr radius for subordinating families of analytic functions and
  bounded harmonic mappings.
\newblock {\em J. Math. Anal. Appl.}, 420(1): 124--136, 2014.

\bibitem{ali2019bohr}
R.~M. Ali, N.~K. Jain, and V.~Ravichandran.
\newblock Bohr radius for classes of analytic functions.
\newblock {\em Results Math.}, 74(4): 1--13, 2019.

\bibitem{alkhaleefah2019bohr}
S.~Alkhaleefah, I.~R. Kayumov, and S.~Ponnusamy.
\newblock On the Bohr inequality with a fixed zero coefficient.
\newblock {\em Proc. Amer. Math. Soc.}, 147(12): 5263--5274, 2019.

\bibitem{allu2021bohr}
V.~Allu and H.~Halder.
\newblock Bohr radius for certain classes of starlike and convex univalent
  functions.
\newblock {\em J. Math. Anal. Appl.}, 493(1), 2021.

\bibitem{allu2022bohr}
V.~Allu and H.~Halder.
\newblock The Bohr inequality for certain harmonic mappings.
\newblock {\em Indag. Math.}, 33(3): 581--597, 2022.

\bibitem{bhowmik2021bohr}
B.~Bhowmik and N.~Das.
\newblock Bohr radius and its asymptotic value for holomorphic functions in
  higher dimensions.
\newblock {\em C. R. Math. Acad. Sci. Paris.}, 359(7): 911--918, 2021.

\bibitem{bhowmik2021characterization}
B.~Bhowmik and N.~Das.
\newblock A characterization of banach spaces with nonzero Bohr radius.
\newblock {\em Arch. Math.}, 116(5): 551--558, 2021.

\bibitem{bohr1914theorem}
H.~Bohr.
\newblock A theorem concerning power series.
\newblock {\em Proc. Lond. Math. Soc.}, 2(1): 1--5, 1914.

\bibitem{dai2008note}
S.-Y. Dai and Y.-F. Pan.
\newblock Note on schwarz-pick estimates for bounded and positive real part
  analytic functions.
\newblock {\em Proc. Amer. Math. Soc.}, 2(136): 635--640, 2008.

\bibitem{evdoridis2021improved}
S.~Evdoridis, S.~Ponnusamy, and A.~Rasila.
\newblock Improved Bohr's inequality for shifted disks.
\newblock {\em Results Math.}, 76(1): 1--15, 2021.

\bibitem{graham2003geometric}
I.~Graham and G.~Kohr.
\newblock {\em Geometric function theory in one and higher dimensions}.
\newblock CRC Press, 2003.

\bibitem{hu2021bohr}
X.-J. Hu, Q.-H. Wang, and B.-Y. Long.
\newblock Bohr-type inequalities for bounded analytic functions of Schwarz
  functions.
\newblock {\em AIMS Math.}, 6(12): 13608--13621, 2021.

\bibitem{hu2022bohr}
X.-J. Hu, Q.-H. Wang, and B.-Y. Long.
\newblock Bohr-type inequalities with one parameter for bounded analytic
  functions of schwarz functions.
\newblock {\em Bull. Malays. Math. Sci. Soc.}, 45(1): 575--591, 2022.

\bibitem{2021Bohr}
Y.~Huang, M.-S. Liu, and S.~Ponnusamy.
\newblock Bohr-type inequalities for harmonic mappings with a multiple zero at
  the origin.
\newblock {\em Mediterr. J. Math.}, 2(18), 2021.

\bibitem{kaptanoglu2005bohr}
H.~T. Kaptanoglu and N.~Sadik.
\newblock Bohr radii of elliptic regions.
\newblock {\em Russ. J. Math. Phys.}, 12(3): 363--368, 2005.

\bibitem{kayumov2020bohr}
I.~R. Kayumov, D.~M. Khammatova, and S.~Ponnusamy.
\newblock On the Bohr inequality for the ces{\'a}ro operator.
\newblock {\em C. R. Math. Acad. Sci. Paris.}, 358(5): 615--620, 2020.

\bibitem{kayumov2021bohr}
I.~R. Kayumov, D.~M. Khammatova, and S.~Ponnusamy.
\newblock Bohr--Rogosinski phenomenon for analytic functions and ces{\'a}ro
  operators.
\newblock {\em J. Math. Anal. Appl.}, 496(2), 2021.

\bibitem{kayumov2022bohr}
I.~R. Kayumov, D.~M. Khammatova, and S.~Ponnusamy.
\newblock The Bohr inequality for the generalized ces{\'a}ro averaging
  operators.
\newblock {\em Mediterr. J. Math.}, 19(1): 1--16, 2022.

\bibitem{kayumov2017bohr}
I.~R. Kayumov and S.~Ponnusamy.
\newblock Bohr--Rogosinski radius for analytic functions.
\newblock {\em arXiv:1708.05585v1}, 2017.

\bibitem{2018Bohr}
I.~R. Kayumov and S.~Ponnusamy.
\newblock Bohr's inequalities for the analytic functions with Lacunary series
  and harmonic functions.
\newblock {\em J. Math. Anal. Appl}, 2018.

\bibitem{kresin2007hadamard}
G.~Kresin.
\newblock Hadamard-Borel-Carath{\'e}odory and bohr theorems for non-concentric
  disks.
\newblock {\em Funct. Differ. Equ.}, 14(2-4): 315--336, 2007.

\bibitem{lata2022bohr}
S.~Lata and D.~Singh.
\newblock Bohr's inequality for non-commutative Hardy spaces.
\newblock {\em Proc. Amer. Math. Soc.}, 150(1): 201--211, 2022.

\bibitem{Ming2018Bohr}
M.- S. Liu, Y.- M. Shang, and J.- F. Xu.
\newblock Bohr-type inequalities of analytic functions.
\newblock {\em J. Math. Anal. Appl.}, 2018.

\bibitem{popescu2019bohr}
G.~Popescu.
\newblock Bohr inequalities for free holomorphic functions on polyballs.
\newblock {\em Adv. Math.}, 347: 1002--1053, 2019.

\bibitem{righetti2022converse}
M.~Righetti, J.~M. Sepulcre, and Vidal T.
\newblock The converse of Bohr's equivalence theorem with fourier exponents
  linearly independent over the rational numbers.
\newblock {\em J. Math. Anal. Appl}, 513(2), 2022.

\bibitem{sidon1927satz}
S.~Sidon.
\newblock {\"U}ber einen satz von herrn Bohr.
\newblock {\em Math. Z.}, 26(1): 731--732, 1927.

\bibitem{tomic1962theoreme}
M.~Tomi{\'c}.
\newblock Sur un th{\'e}or{\`e}me de h. Bohr.
\newblock {\em Math. Scand.}, 11(1): 103--106, 1962.

\bibitem{wu2022some}
L.~Wu, Q.-H. Wang, and B.-Y. Long.
\newblock Some Bohr-type inequalities with one parameter for bounded analytic
  functions.
\newblock {\em Rev. R. Acad. Cienc. Exactas F\'{i}s. Nat. Ser. A Mat. RACSAM},
  116(2): 1--13, 2022.

\end{thebibliography}
\end{document}